%-----------------------------------------------------------------------
% Beginning of article.tex
%-----------------------------------------------------------------------
%
% AMS-LaTeX 1.2 sample file for book proceedings, based on amsproc.cls.
%
% Replace amsproc by the documentclass for the target series, e.g. pspum-l.
%

\documentclass[11pt,reqno]{amsart}
\usepackage{amsmath,amsthm,amsfonts,amssymb,amscd,amsbsy}
\usepackage[utf8]{inputenc}
\usepackage{array,colortbl}
\usepackage{tikz}%figuras
\usepackage{stix}
\usetikzlibrary{cd,arrows,babel}
\usepackage{stmaryrd}% caracteres matematicos especiales
\usepackage{mathtools}
\usepackage{graphicx}
\usepackage[english]{babel}
\usepackage{fullpage}
\usepackage{comment}
\usepackage{relsize}

% Useful packages
\usepackage{graphicx}
\usepackage[colorlinks=true, allcolors=blue]{hyperref}

\numberwithin{equation}{section}

\usepackage{lineno}

\newtheorem{theorem}{Theorem}[section]
\newtheorem{lemma}[theorem]{Lemma}

\newtheorem{proposition}[theorem]{Proposition}
\newtheorem{corollary}[theorem]{Corollary}
\newtheorem{claim}[theorem]{Claim}

\theoremstyle{definition}
\newtheorem{remark}[theorem]{Remark}
\newtheorem{definition}[theorem]{Definition}

\newtheorem{example}[theorem]{Example}
\newtheorem{question}[theorem]{Question}
\newtheorem{fact}[theorem]{Fact}

\numberwithin{equation}{section}

%    Absolute value notation

\newcommand{\N}{\mathbb N}

%    Blank box placeholder for figures (to avoid requiring any
%    particular graphics capabilities for printing this document).

\begin{document}
\title{The lattice Sch\"affer constant}

\author{Michael A. Rinc\'on-Villamizar}
\address{Universidad Industrial de Santander (UIS), Escuela de Matem\'aticas, Bucaramanga, Colombia}
\email{marinvil@uis.edu.co}

\author[Oikhberg]{Timur Oikhberg} 
\address{Dept. of Mathematics, University of Illinois, Urbana-Champaign}
\email{oikhberg@illinois.edu}

\thanks{The first author was supported by Universidad Industrial de Santander}

%    General info
\subjclass{46B03, 46B25, 46B85, 46E15}
%\date{January 1, 1994 and, in revised form, June 22, 1994.}

%\dedicatory{This paper is dedicated to our advisors.}

\begin{abstract}
For a Banach lattice $X$, its lattice Sch\"affer constant is defined by: 
\begin{gather*}
    \lambda^+(X)=\inf\{\max\{\|x+y\|,\|x-y\|\}\,\colon\,\|x\|=\|y\|=1,x,y\geq{\bf0}\}.
\end{gather*}
In this paper, we investigate this constant, as well as the companion parameter 
\begin{gather*}
    \beta(X)=\inf\{\|x\vee y\|\,\colon\,\mbox{$\|x\|=\|y\|=1$, $x,y\geq{\bf0}$ and $x\wedge y={\bf0}$}\}.
\end{gather*}
Our main results fall into two groups. (1) We link the behavior of the parameters $\lambda^+$ and $\beta$ to the global properties of the lattice $X$. For instance, 
%Among the main results of this study, 
we prove that (i) if $\lambda^+(X)>1$, then the Banach lattice $X$ is a KB-space, and moreover, it satisfies a lower $q$-estimate for some $q\in(1,\infty)$; (ii) $\lambda^+(X)=1$ if and only if $X$ contains lattice-almost isometric copies of $\ell_\infty^2$; and (iii) that $\lambda^+(X)=2$ if and only if  $X$ is an abstract $L$-space. 
(2) We establish inequalities relating $\lambda^+(X)$ to the characteristics of monotonicity, $\varepsilon_{0,m}(X)$ and $\tilde\varepsilon_{0,m}(X)$.
Along the way, we compute $\lambda^+(X)$ and $\beta(X)$ for various Banach lattices $X$.
\end{abstract}

\keywords{Banach lattices, Sch\"affer constant, lattice Sch\"affer constant}

\maketitle

\section{Introduction}

Many authors have explored the idea of associating to a given Banach space or Banach lattice a real function or a numerical parameter, in order to analyze some specific geometric properties of that space (or lattice). For instance, J. Gao introduced a parameter in \cite{gao}, now known as the James constant, which quantifies the non-squareness of the unit ball. The James constant has been widely studied by numerous researchers (see \cite{kato-maligranda-takahashi} and the references therein for a comprehensive overview).

Another parameter, introduced by J. J. Sch\"affer in \cite{schaffer} and perhaps less well known, is the so-called Sch\"affer constant, defined for a Banach space $X$ as
\begin{equation*}
    \lambda(X)=\inf\{\max\{\|x+y\|,\|x-y\|\}\,\colon\,\|x\|=\|y\|=1\}.
\end{equation*}
This constant is related to the concept of uniform non-squareness introduced by Sch\"affer in 1970. As K. Jarosz demonstrated in \cite{jarosz}, the Sch\"affer constant has applications in the study of vector-valued extensions of the Banach–Stone theorem. Furthermore, in the pursuit of vector-valued lattice extensions of the Banach–Stone theorem, E. M. Galego and the first author introduced a lattice version of the Sch\"affer constant in \cite{galego-me positive} for a real Banach lattice $X$, namely:
\begin{equation*}
    \lambda^+(X)=\inf\{\max\{\|x+y\|,\|x-y\|\}\,\colon\,\mbox{$x,y\geq{\bf0}$ and $\|x\|=\|y\|=1$}\}.
\end{equation*}
We refer to this parameter here as the \textbf{lattice Sch\"affer constant}. Notice that $\lambda(X)\leq\lambda^+(X)$ for every Banach lattice $X$. Moreover, the equality does not hold in general (see examples below).

The aim of this paper is to study the lattice Sch\"affer constant, as well as its companion parameter: 
\begin{gather*}
    \beta(X)=\inf\{\|x\vee y\|\,\colon\,\mbox{$\|x\|=\|y\|=1$, $x,y\geq{\bf0}$ and $x\wedge y={\bf0}$}\}.
\end{gather*}

We begin by outlining the organization of the paper. In Section \ref{preliminaries}, we review key facts about Banach lattices, and recall the definitions of the  lower and upper moduli of uniform monotonicity of a Banach lattice 
$X$, denoted by $\delta_{m,X}(\cdot)$ and $\sigma_X(\cdot)$, respectively. We also gather some key facts about these moduli and their corresponding characteristics, $\varepsilon_{0,m}(X)$ and $\tilde\varepsilon_{0,m}(X)$.

In Section \ref{otros parametros}, we examine the relationship between the lattice Sch\"affer constant and the Riesz angle of a Banach lattice, $\alpha(X)$, as introduced by J.M. Borwein and B. Sims in \cite{borwein-sims}. Additionally, we compute the value $\lambda^+(X)$ and $\beta(X)$ for several Banach lattices $X$. % and establish that $\lambda^+(X)=2$ if and only if the norm of $X$ is L-additive.

In Section \ref{stability} we show that the parameters $\lambda^+$ and $\beta$ are preserved when passing to the second dual and to the ultrapower. Further, these quantities can be computed using finite dimensional sublattices only.

Section \ref{global} explores some connections between our parameters and the ``global'' properties of Banach lattices, such as their convexity, concavity, or being a KB-space.

Finally, in Section \ref{algunas propiedades de lamba^+} we connect the lattice Sch\"affer constant to the moduli of monotonicity.
% is dedicated to exploring the properties of the lattice Sch\"affer constant.
 % We prove that if $\lambda^+(X)>1$, then $X$ is a KB-space and satisfies a lower $q$-estimate for some $q\in(1,\infty).$
 % Furthermore, we show that   $\lambda^+(X)=1$ if and only if $X$ contains lattice-almost isometric copies of $\ell_\infty^2$.
 % We also establish that for every Banach lattice $X$ and any nontrivial ultrafilter $\mathcal U$ on $\mathbb N$, the equality $\lambda^+(X)=\lambda^+(X^{**})=\lambda^+(X_\mathcal U)$ holds.   Moreover
 For instance, we prove that for every Banach lattice $X$, the inequality $\lambda^+(X)\leq2-\tilde\varepsilon_{0,m}(X)$ holds. Also we show that $\varepsilon_{0,m}(X)<1$ if and only if $\lambda^+(X)>1$. As a consequence, we prove that if $\tilde\varepsilon_{0,m}(X)+\alpha(X)<2$, then $X$ is superreflexive. 

\section{Preliminaries}\label{preliminaries}

The standard terminology and notation of Banach space and Banach lattice theory will be used and can be found in \cite{AK} and \cite{meyer}. The Banach spaces considered here are assumed to be real. 
If $X$ is a Banach lattice, $X^+$ denotes the positive cone of X. If $X$ is a Banach space, $S_X$ and $B_X$ stand for the unit sphere and the closed unit ball, respectively. Also, we set $S_X^+=S_X\cap X^+$ and $B_X^+=B_X\cap X^+$. If $X$ and $Y$ are Banach lattices, a bijective operator $T\colon X\to Y$ is called \textit{Banach lattice isomorphism} if 
$T(x\vee y)=Tx\vee Ty$ for all $x,y\in X$. A \textit{Banach lattice isometry} is a Banach lattice isomorphism which is also an isometry.

We briefly recall the definition of the ultrapower of a Banach space (see e.g.~\cite{Heinrich} for more information). Let $X$
be a Banach space and let $\mathcal U$ be a non-trivial ultrafilter on $\mathbb N$. If $(a_n)$ is a bounded sequence of real numbers, it can be shown that there is an unique real number $a$ such that
for each $\varepsilon>0$, the set $\{n\in\mathbb N\,\colon\,|a_n-a|<\varepsilon\}$ belongs to $\mathcal U$. The number $a$ is called the $\mathcal U$-limit of the sequence $(a_n)$
and it is denoted by $\mathcal U-\lim a_n$.

Let $\ell_\infty(X)$ be the Banach space of all bounded sequences $(x_n)$ in $X$ endowed with the natural norm. The set $c_{0,\mathcal U}(X)=\{(x_i)\in\ell_\infty(X)\,\colon\,\mathcal U-\lim\|x_i\|=0\}$ is a closed subspace of $\ell_\infty(X)$. The \textit{ultrapower of} $X$ \textit{following $\mathcal U$} (or \textit{along $\mathcal U$}) is defined as $X_\mathcal U\coloneqq \ell_\infty(X)\big/c_{0,\mathcal U}(X)$ with the quotient norm. The elements of $X_\mathcal U$ are denoted by $(x_n)_\mathcal U$.
An instructive exercise is to show that the norm of $(x_n)_\mathcal U\in X_\mathcal U$ is given by $\|(x_n)_\mathcal U\|=\mathcal U-\lim\|x_n\|$.

Finally, if $X$ is a Banach lattice, the formulas
\begin{gather*}
    (x_n)_\mathcal U\vee (y_n)_\mathcal U\coloneqq(x_n\vee y_n)_\mathcal U\quad\mbox{and}\quad (x_n)_\mathcal U\wedge (y_n)_\mathcal U\coloneqq(x_n\wedge y_n)_\mathcal U
\end{gather*}
for each $(x_n)_\mathcal U,(y_n)_\mathcal U\in X_\mathcal U$ define a natural Banach lattice structure on the ultrapower $X_\mathcal U$. The following fact is a consequence of definitions.

\begin{fact}\label{embedding into the ultrapower}
If $X$ is a Banach lattice, then the map $x\in X\mapsto(x,x,\ldots)_\mathcal U\in X_\mathcal U$ is a Banach lattice isometry.
\end{fact}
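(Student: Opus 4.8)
The plan is a direct verification against the definitions of the ultrapower and of its Banach lattice structure recalled above. Write $\iota\colon X\to X_{\mathcal U}$ for the map $\iota(x)=(x,x,\ldots)_{\mathcal U}$. First I would observe that $\iota$ is well defined: for each $x\in X$ the constant sequence $(x,x,\ldots)$ is bounded, hence belongs to $\ell_\infty(X)$ and determines a class in $X_{\mathcal U}=\ell_\infty(X)\big/c_{0,\mathcal U}(X)$.

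Linearity is immediate: addition and scalar multiplication on $X_{\mathcal U}$ are induced coordinatewise from $\ell_\infty(X)$, and $(\alpha x+\beta y,\alpha x+\beta y,\ldots)=\alpha(x,x,\ldots)+\beta(y,y,\ldots)$ already holds in $\ell_\infty(X)$; passing to quotient classes gives $\iota(\alpha x+\beta y)=\alpha\iota(x)+\beta\iota(y)$. For the isometric property I would use the formula $\|(x_n)_{\mathcal U}\|=\mathcal U-\lim\|x_n\|$ stated above: since the $\mathcal U$-limit of a constant sequence equals that constant, $\|\iota(x)\|=\mathcal U-\lim\|x\|=\|x\|$. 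In particular $\iota$ is injective, as $\iota(x)=\iota(y)$ forces $\|x-y\|=\|\iota(x-y)\|=0$.

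It remains to check that $\iota$ preserves the lattice operations. By the coordinatewise definition of $\vee$ on $X_{\mathcal U}$,
\[
\iota(x)\vee\iota(y)=(x,x,\ldots)_{\mathcal U}\vee(y,y,\ldots)_{\mathcal U}=(x\vee y,x\vee y,\ldots)_{\mathcal U}=\iota(x\vee y),
\]
and symmetrically $\iota(x)\wedge\iota(y)=\iota(x\wedge y)$; thus $\iota$ is a lattice homomorphism (in particular positive) which maps $X$ isometrically and order-isomorphically onto the sublattice $\iota(X)\subseteq X_{\mathcal U}$. There is no real obstacle here; the only point requiring a word of care is that the operations $\vee,\wedge$ on $X_{\mathcal U}$ are independent of the representatives chosen, which is precisely the content of the construction of the natural Banach lattice structure on $X_{\mathcal U}$ recalled just before the statement.
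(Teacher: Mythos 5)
Your verification is correct and is exactly what the paper intends: it states this Fact without proof, noting only that it ``is a consequence of definitions,'' and your coordinatewise check of linearity, the norm formula $\|(x_n)_{\mathcal U}\|=\mathcal U\text{-}\lim\|x_n\|$, and the lattice operations is precisely that routine verification.
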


%\begin{definition}
 %   A Banach lattice $X$ is called \textbf{uniformly monotone} if for each $\varepsilon>0$, there is $0<\delta<1$
 %   such that if $0\leq x\leq y$, $\|y\|=1$ and $\|x\|\geq\varepsilon$, we have $\|x-y\|\leq 1-\delta$.
%\end{definition}

Now we introduce two useful functions. Let $X$ be a Banach lattice. For $\varepsilon\in[0,1]$ we set
\begin{align*}
    \sigma_X(\varepsilon)&=\inf\{\|x+\varepsilon y\|-1\,\colon\,x,y\in S_{X}^+\},\quad\mbox{and}\\
     \delta_{m,X}(\varepsilon)&=\inf\{1-\|x-y\|\,\colon\,{\bf0}\leq y\leq x,\|x\|\leq1,\|y\|\geq\varepsilon\}.
\end{align*}
These functions are called the \textit{upper modulus of monotonicity} and the \textit{lower modulus of uniform monotonicity} of $X$, respectively. It is known that $\delta_{m,X}$ is non-decreasing in $[0,1]$ and continuous in $[0,1)$. Associated with these modules, we have the following numbers called \textit{characteristics of monotonicity} of $X$: 
\begin{align*}
    \varepsilon_{0,m}(X)&=\sup\{\varepsilon\in[0,1)\,\colon\,\delta_{m,X}(\varepsilon)=0\},\quad\mbox{and}\\
    \tilde\varepsilon_{0,m}(X)&=\sup\{\varepsilon\in[0,1)\,\colon\,\sigma_X(\varepsilon)=0\}.
\end{align*}
We say that $X$ is \textit{uniformly monotone} if $\delta_{m,X}(\varepsilon)>0$ for every $\varepsilon\in(0,1]$. It is known that
$X$ is uniformly monotone if and only if $\varepsilon_{0,m}(X)>0$ if and only if $\tilde\varepsilon_{0,m}(X)>0$. This is a consequence of the
inequalities (see \cite[Theorem 1]{hudzik-kaczmarek}):
\begin{gather}\label{inequalities envolving e_0 and tilde e_0}
    \varepsilon_{0,m}(X)\leq\tilde\varepsilon_{0,m}(X)\leq2\varepsilon_{0,m}(X).
\end{gather}
For more information on these objects, see \cite{fora and all}.

For a proof of the following statements, see \cite{kurcII} and \cite[Theorem 2.1]{mar-prus}, respectively.

\begin{fact}\label{Inequalities between delta and eta}
    Let $X$ be a Banach lattice. For all $\varepsilon\in(0,1)$ we have
    \begin{gather*}
        \frac{\delta_{m,X}\left(\dfrac{\varepsilon}{1+\varepsilon}\right)}{1-\delta_{m,X}\left(\dfrac{\varepsilon}{1+\varepsilon}\right)}\leq\sigma_X(\varepsilon)\leq \frac{\delta_{m,X}(\varepsilon)}{1-\delta_{m,X}(\varepsilon)}
    \end{gather*}
\end{fact}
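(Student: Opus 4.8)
The plan is to prove both inequalities by one normalization trick that trades the operation ``add a small positive multiple'' (appearing in $\sigma_X$) for ``subtract a positive part'' (appearing in $\delta_{m,X}$), and back. The only lattice input needed is the monotonicity of the norm on the positive cone; in particular, for $u,v\in X^+$ and $0\le s\le t$ one has $\|u+sv\|\le\|u+tv\|$, since ${\bf0}\le u+sv\le u+tv$. I will also record the elementary bounds $0\le\sigma_X(\varepsilon)$ (because $\|x+\varepsilon y\|\ge\|x\|=1$) and $0\le\delta_{m,X}(\eta)\le\eta<1$ for $\eta\in(0,1)$ (take $y=\eta x$ with $\|x\|=1$), so that every denominator below is positive; finally, $g(s)=s/(1+s)$ is an increasing bijection of $[0,\infty)$ onto $[0,1)$ whose inverse is $d\mapsto d/(1-d)$.

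For the right-hand inequality, fix ${\bf0}\le y\le x$ with $\|x\|\le1$ and $\|y\|\ge\varepsilon$, and put $z=x-y\in X^+$. If $z={\bf0}$ then $1-\|x-y\|=1$ and the estimate is trivial, so assume $z\ne{\bf0}$ and set $u=z/\|z\|$, $v=y/\|y\|\in S_X^+$. Then $x=\|z\|\bigl(u+(\|y\|/\|z\|)\,v\bigr)$, and since $\|z\|=\|x-y\|\le\|x\|\le1$ we have $\|y\|/\|z\|\ge\|y\|\ge\varepsilon$; hence, by norm monotonicity in the coefficient and the definition of $\sigma_X$, $\|x\|=\|z\|\,\|u+(\|y\|/\|z\|)v\|\ge\|z\|\,\|u+\varepsilon v\|\ge\|z\|\,(1+\sigma_X(\varepsilon))$. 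Using $\|x\|\le1$ we get $\|z\|\le 1/(1+\sigma_X(\varepsilon))$, so $1-\|x-y\|=1-\|z\|\ge\sigma_X(\varepsilon)/(1+\sigma_X(\varepsilon))=g(\sigma_X(\varepsilon))$. Taking the infimum over admissible pairs gives $\delta_{m,X}(\varepsilon)\ge g(\sigma_X(\varepsilon))$, and applying the increasing inverse $d\mapsto d/(1-d)$ (legitimate since $\delta_{m,X}(\varepsilon)<1$) yields $\sigma_X(\varepsilon)\le\delta_{m,X}(\varepsilon)/(1-\delta_{m,X}(\varepsilon))$.

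For the left-hand inequality, fix $x,y\in S_X^+$ and set $a=x+\varepsilon y$, $b=\varepsilon y$, so that ${\bf0}\le b\le a$, $a-b=x$, $\|b\|=\varepsilon$, and $1\le\|a\|\le\|x\|+\varepsilon\|y\|=1+\varepsilon$. Rescaling by $\|a\|$, the pair $(a/\|a\|,\,b/\|a\|)$ is admissible in the definition of $\delta_{m,X}$ with parameter $\|b\|/\|a\|=\varepsilon/\|a\|$; since $\delta_{m,X}$ is non-decreasing and $\varepsilon/\|a\|\ge\varepsilon/(1+\varepsilon)$, this gives
\begin{equation*}
1-\frac{1}{\|a\|}=1-\left\|\frac{a-b}{\|a\|}\right\|\ge\delta_{m,X}\!\left(\frac{\varepsilon}{\|a\|}\right)\ge\delta_{m,X}\!\left(\frac{\varepsilon}{1+\varepsilon}\right).
\end{equation*}
Writing $d=\delta_{m,X}(\varepsilon/(1+\varepsilon))\in[0,1)$, this reads $\|a\|\ge 1/(1-d)$, i.e.\ $\|x+\varepsilon y\|-1\ge d/(1-d)$; taking the infimum over $x,y\in S_X^+$ yields $\sigma_X(\varepsilon)\ge\delta_{m,X}(\varepsilon/(1+\varepsilon))/(1-\delta_{m,X}(\varepsilon/(1+\varepsilon)))$.

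I do not expect a genuine obstacle here, since the argument is elementary once the normalization is set up. The two points needing a little care are: (a) correctly tracking that the argument of $\delta_{m,X}$ shrinks from $\varepsilon$ to $\varepsilon/(1+\varepsilon)$ in the lower bound — this is forced by the only a priori estimate available, $\|x+\varepsilon y\|\le1+\varepsilon$; and (b) the degenerate case $x=y$ (equivalently $z={\bf0}$) in the first part, which must be separated off before normalizing but is harmless. One should also note at the outset, as above, that $\delta_{m,X}(\eta)<1$ on $(0,1)$ so that both right-hand sides are finite and the inversions are valid.
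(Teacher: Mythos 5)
Your argument is correct. Note first that the paper itself gives no proof of this Fact: it is quoted from the literature (the reference \cite{kurcII} of Kurc), so there is no in-paper argument to compare against. Your self-contained derivation is sound on both sides. For the upper bound, the decomposition $x=\|z\|\bigl(u+(\|y\|/\|z\|)v\bigr)$ with $z=x-y$, together with $\|y\|/\|z\|\geq\|y\|\geq\varepsilon$ (using $\|z\|\leq\|x\|\leq1$) and the definition of $\sigma_X$, correctly yields $\|z\|\leq1/(1+\sigma_X(\varepsilon))$ and hence $\delta_{m,X}(\varepsilon)\geq\sigma_X(\varepsilon)/(1+\sigma_X(\varepsilon))$; inverting via the increasing map $d\mapsto d/(1-d)$ is legitimate because of your preliminary bound $\delta_{m,X}(\varepsilon)\leq\varepsilon<1$. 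For the lower bound, normalizing the pair $(x+\varepsilon y,\varepsilon y)$ by $\|x+\varepsilon y\|\in[1,1+\varepsilon]$ and using the (elementary, and stated in Section 2 of the paper) monotonicity of $\delta_{m,X}$ correctly forces the argument down to $\varepsilon/(1+\varepsilon)$, which is exactly why the two sides of the Fact carry different arguments of $\delta_{m,X}$. The degenerate case $x=y$ and the positivity of all denominators are handled explicitly, so there is no gap. The only lattice-theoretic ingredient you use is monotonicity of the norm on the positive cone, which is all that is needed; this makes the proof a clean, elementary substitute for the external citation.
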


\begin{fact}\label{relationship between delta and eta}
    Let $X$ be a Banach lattice. For all $\varepsilon\in[0,1]$ we have
     \begin{gather*}
    \delta_{m,X}\left(\frac{\varepsilon}{1+\sigma_X(\varepsilon)}\right)=\frac{\sigma_X(\varepsilon)}{1+\sigma_X(\varepsilon)}.
    \end{gather*}
\end{fact}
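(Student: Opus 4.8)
The plan is to prove the identity by two opposite inequalities. Throughout write $\sigma:=\sigma_X(\varepsilon)$ and $\varepsilon':=\varepsilon/(1+\sigma)$, and note $0\le\sigma\le\varepsilon$ (the bound $\sigma\le\varepsilon$ follows from $\|x+\varepsilon y\|\le\|x\|+\varepsilon\|y\|=1+\varepsilon$ for $x,y\in S_X^+$), so $\varepsilon'\le\varepsilon\le1$; one may assume $\varepsilon>0$, since for $\varepsilon=0$ both sides vanish. An elementary identity that will make the estimates fit together with no room to spare is
\begin{gather*}
(1-\varepsilon')(1+\sigma)=1+\sigma-\varepsilon .
\end{gather*}

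\emph{Lower bound $\delta_{m,X}(\varepsilon')\ge\sigma/(1+\sigma)$.} Fix $x,y$ with $\mathbf{0}\le y\le x$, $\|x\|\le1$ and $\|y\|\ge\varepsilon'$; put $z:=x-y\ge\mathbf{0}$, $a:=\|z\|$, assume $a>0$, and set $\hat z:=z/a$, $\hat y:=y/\|y\|$, both in $S_X^+$. I would estimate the norm of the positive vector $z+\varepsilon a\,\hat y$ from above in two regimes. If $\varepsilon a\le\|y\|$ then $\varepsilon a\,\hat y\le y$, so by monotonicity of the norm $\|z+\varepsilon a\,\hat y\|\le\|z+y\|=\|x\|\le1$. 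If $\varepsilon a\ge\|y\|$ then $z+\varepsilon a\,\hat y=x+(\varepsilon a-\|y\|)\hat y$, so by the triangle inequality $\|z+\varepsilon a\,\hat y\|\le1+\varepsilon a-\|y\|\le1+\varepsilon a-\varepsilon'$. In either case $\|z+\varepsilon a\,\hat y\|\le1+\max\{0,\varepsilon a-\varepsilon'\}$. On the other hand $\|z+\varepsilon a\,\hat y\|=a\,\|\hat z+\varepsilon\hat y\|\ge a(1+\sigma)$ by the definition of $\sigma=\sigma_X(\varepsilon)$. Comparing: if $\varepsilon a\le\varepsilon'$ then $a(1+\sigma)\le1$; if $\varepsilon a>\varepsilon'$ then $a(1+\sigma-\varepsilon)\le1-\varepsilon'=(1+\sigma-\varepsilon)/(1+\sigma)$, which yields $a\le1/(1+\sigma)$ when $1+\sigma-\varepsilon>0$, while the case $1+\sigma-\varepsilon=0$ cannot occur here (it would force $\varepsilon a>\varepsilon'=1$, contradicting $a=\|x-y\|\le\|x\|\le1$). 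Thus $a\le1/(1+\sigma)$ in all cases, so $1-\|x-y\|\ge\sigma/(1+\sigma)$, and taking the infimum gives this direction.

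\emph{Upper bound $\delta_{m,X}(\varepsilon')\le\sigma/(1+\sigma)$.} Given $\eta>0$, choose $x,y\in S_X^+$ with $m:=\|x+\varepsilon y\|<1+\sigma+\eta$ (and $m\ge1+\sigma$), and consider $x':=(x+\varepsilon y)/m$, $y':=\varepsilon y/m$. Then $\mathbf{0}\le y'\le x'$, $\|x'\|=1$, $\|y'\|=\varepsilon/m$ and $\|x'-y'\|=1/m$, so this pair witnesses $\delta_{m,X}(t)\le1-1/m<(\sigma+\eta)/(1+\sigma+\eta)$ for every $t\le\varepsilon/m$, in particular for every $t\le\varepsilon/(1+\sigma+\eta)$. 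Using monotonicity of $\delta_{m,X}$ and letting $\eta\downarrow0$ we get $\delta_{m,X}(s)\le\sigma/(1+\sigma)$ for every $s<\varepsilon'$; since $\delta_{m,X}$ is continuous on $[0,1)$ and $\varepsilon'<1$ — unless $\varepsilon=1$ and $\sigma=0$, a degenerate case in which the claim reads $\delta_{m,X}(1)=0$ and is treated separately — we conclude $\delta_{m,X}(\varepsilon')=\lim_{s\uparrow\varepsilon'}\delta_{m,X}(s)\le\sigma/(1+\sigma)$.

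The delicate step is the lower bound: one must hit on the auxiliary vector $z+\varepsilon a\,\hat y$ and recognize that the monotonicity estimate and the triangle-inequality estimate collapse into the single bound $1+\max\{0,\varepsilon a-\varepsilon'\}$, after which the algebra closes with no slack precisely because $(1-\varepsilon')(1+\sigma)=1+\sigma-\varepsilon$; this tightness is exactly why the statement is an equality rather than merely a one-sided estimate. The upper bound is then routine bookkeeping, the only nuisance being the unavoidable loss of $\eta/(1+\sigma)$ in the parameter, which is absorbed using monotonicity together with the continuity of $\delta_{m,X}$ on $[0,1)$.
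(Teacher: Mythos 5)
The paper does not actually prove this statement --- it is quoted as a known fact, with references to Kurc and to Markowicz--Prus --- so your argument has to stand on its own. Both halves are correct where they apply: the lower bound via the auxiliary vector $z+\varepsilon a\,\hat y$ and the case split $\varepsilon a\lessgtr\|y\|$ is sound (the algebra does close without slack, exactly as you observe), and the upper bound via $x'=(x+\varepsilon y)/m$, $y'=\varepsilon y/m$, combined with monotonicity and the continuity of $\delta_{m,X}$ on $[0,1)$, is fine.

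The genuine gap is the case you set aside: $\varepsilon'=1$, i.e.\ $\varepsilon=1$ and $\sigma_X(1)=0$ (equivalently $\lambda^+(X)=1$). You announce that this case ``is treated separately'' but never return to it, and in fact it cannot be treated, because the identity $\delta_{m,X}(1)=0$ is then false in general. Take $X=\bigl(\bigoplus_n\ell_{p_n}^2\bigr)_{\ell_2}$ with $p_n\to\infty$, the lattice appearing in the remark after Theorem \ref{Banach lattices with l>1 satisfacen q-estimativa}: there $\lambda^+(X)=1$, hence $\sigma_X(1)=0$; but this lattice is strictly monotone, so the constraints ${\bf0}\le y\le x$, $\|x\|\le1$, $\|y\|\ge1$ force $\|y\|=\|x\|$ and then $y=x$, whence $\delta_{m,X}(1)=1\ne0$. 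So your proof establishes the Fact exactly on its true domain of validity, namely whenever $\varepsilon/(1+\sigma_X(\varepsilon))<1$, and the statement as printed (for all $\varepsilon\in[0,1]$) fails at that endpoint. This does not harm the paper's applications --- in Theorem \ref{lambda^+>1 implica que e_0<1} the case $\lambda^+(X)=1$ is disposed of by a separate trivial remark --- but in your write-up you should either restrict the range of $\varepsilon$ and record the counterexample, or not promise a separate treatment that cannot exist.
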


\begin{lemma}\label{continuity of eta}
    Let $X$ be a Banach lattice. The map $\varepsilon\in[0,1]\mapsto\sigma_X(\varepsilon)\in[0,1]$ is 1-Lipschitz continuous.
\end{lemma}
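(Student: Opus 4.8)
The plan is to show directly that $|\sigma_X(\varepsilon_1) - \sigma_X(\varepsilon_2)| \le |\varepsilon_1 - \varepsilon_2|$ for all $\varepsilon_1, \varepsilon_2 \in [0,1]$, by exploiting the fact that $\sigma_X$ is defined as an infimum of a family of functions of $\varepsilon$, each of which is itself 1-Lipschitz. Indeed, for fixed $x, y \in S_X^+$, the map $\varepsilon \mapsto \|x + \varepsilon y\| - 1$ has Lipschitz constant $\|y\| = 1$ by the triangle inequality: $\bigl| \|x + \varepsilon_1 y\| - \|x + \varepsilon_2 y\| \bigr| \le |\varepsilon_1 - \varepsilon_2|\,\|y\| = |\varepsilon_1 - \varepsilon_2|$. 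Since the infimum of any family of functions sharing a common Lipschitz constant $L$ is again $L$-Lipschitz (on the set where the infimum is finite), we conclude that $\sigma_X$ is 1-Lipschitz on $[0,1]$.

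Concretely, I would argue as follows. Fix $\varepsilon_1, \varepsilon_2 \in [0,1]$ and let $\eta > 0$. Choose $x, y \in S_X^+$ with $\|x + \varepsilon_2 y\| - 1 < \sigma_X(\varepsilon_2) + \eta$. Then
\begin{align*}
\sigma_X(\varepsilon_1) &\le \|x + \varepsilon_1 y\| - 1 \\
&\le \|x + \varepsilon_2 y\| - 1 + \bigl| \|x + \varepsilon_1 y\| - \|x + \varepsilon_2 y\| \bigr| \\
&\le \sigma_X(\varepsilon_2) + \eta + |\varepsilon_1 - \varepsilon_2|.
\end{align*}
Letting $\eta \to 0$ gives $\sigma_X(\varepsilon_1) - \sigma_X(\varepsilon_2) \le |\varepsilon_1 - \varepsilon_2|$, and by symmetry the reverse inequality, hence $|\sigma_X(\varepsilon_1) - \sigma_X(\varepsilon_2)| \le |\varepsilon_1 - \varepsilon_2|$, which is the claim.

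This argument is essentially self-contained and routine; there is no real obstacle. The only points requiring minor care are: (i) checking that $\sigma_X(\varepsilon)$ is indeed finite (even $\le 1$, since taking $x = y$ gives $\|x + \varepsilon x\| - 1 = \varepsilon \le 1$, using $S_X^+ \ne \emptyset$), so that the manipulations above make sense; and (ii) noting that the infimum in the definition of $\sigma_X$ is taken over the same index set $\{(x,y) : x,y \in S_X^+\}$ for every value of $\varepsilon$, which is what allows the transfer of a near-optimal pair from $\varepsilon_2$ to $\varepsilon_1$. Both are immediate from the definitions, so the proof is short.
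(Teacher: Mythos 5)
Your proposal is correct and follows exactly the paper's argument: the pointwise bound $\bigl|\|x+\varepsilon_1 y\|-\|x+\varepsilon_2 y\|\bigr|\le|\varepsilon_1-\varepsilon_2|$ for $x,y\in S_X^+$, followed by passing to the infimum over the common index set. You merely spell out the infimum step that the paper leaves implicit; there is no substantive difference.
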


\begin{proof}
   If $t,s\in[0,1]$, we have
    \begin{gather*}
        |\|x+ty\|-\|x+sy\||\leq|t-s|.
    \end{gather*}
    for all $x,y\in S_X^+$. By taking infima, we conclude that $|\sigma_X(t)-\sigma_X(s)|\leq|t-s|$.
\end{proof}

\begin{corollary}\label{sigma(e_0)=0}
    Let $X$ be a Banach lattice. Then, $\sigma_X(\tilde\varepsilon_{0,m}(X))=0$.
\end{corollary}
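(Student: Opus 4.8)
The plan is to combine the 1-Lipschitz continuity of $\sigma_X$ furnished by Lemma \ref{continuity of eta} with the description of $\tilde\varepsilon_{0,m}(X)$ as a supremum. Write $\varepsilon_0=\tilde\varepsilon_{0,m}(X)$ and let $A=\{\varepsilon\in[0,1)\,\colon\,\sigma_X(\varepsilon)=0\}$. The first observation is that $A\neq\emptyset$, since $\sigma_X(0)=\inf\{\|x\|-1\,\colon\,x\in S_X^+\}=0$; hence $\varepsilon_0=\sup A$ is a well-defined element of $[0,1]$ with $\varepsilon_0\geq 0$.

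If $\varepsilon_0=0$ there is nothing to prove, because $\sigma_X(0)=0$. If $\varepsilon_0>0$, then by definition of the supremum I would pick a sequence $(\varepsilon_n)\subset A$ with $\varepsilon_n\to\varepsilon_0$. Since $\sigma_X(\varepsilon_n)=0$ for every $n$ and $\sigma_X$ is continuous on $[0,1]$ by Lemma \ref{continuity of eta}, passing to the limit yields $\sigma_X(\varepsilon_0)=\lim_n\sigma_X(\varepsilon_n)=0$, which is the claim.

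The only point that deserves a word of care is that the limit above is genuinely $0$ and not merely $\leq 0$: this is because $\sigma_X(\varepsilon)\geq 0$ for all $\varepsilon\in[0,1]$, which follows from the monotonicity of the lattice norm, as ${\bf0}\leq x\leq x+\varepsilon y$ gives $\|x+\varepsilon y\|\geq\|x\|=1$. There is no real obstacle here; the corollary is an immediate consequence of the continuity statement in Lemma \ref{continuity of eta} together with the fact that the zero set of a continuous function is closed.
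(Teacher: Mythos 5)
Your argument is correct and is exactly the intended one: the paper states this as an immediate corollary of Lemma \ref{continuity of eta} without writing out the details, and your details (nonemptiness of the zero set via $\sigma_X(0)=0$, nonnegativity of $\sigma_X$ from monotonicity of the lattice norm, and passing to the limit along a sequence approaching the supremum using the $1$-Lipschitz continuity on all of $[0,1]$, which also covers the case $\tilde\varepsilon_{0,m}(X)=1$) fill it in correctly.
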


\begin{remark}
This work examines quantitative characterizations of monotonicity in a Banach lattice. These are connected with the geometry of its unit ball. We do not study these connections here; an interested reader is referred to e.g.~\cite{HKM}.
\end{remark}

\section{Some parameters associated to Banach spaces and Banach lattices}\label{otros parametros}

In this section, we introduce the lattice Sch\"affer constant and explore its relationship with various other constants associated with Banach spaces and Banach lattices. Additionally, we provide examples of the lattice Sch\"affer constant for specific Banach lattices.

Recall that if $X$ is a Banach space, the James and Sch\"affer constants of $X$ are defined respectively by
\begin{align*}
 J(X)&=\sup\{\inf\{\|x-y\|,\|x+y\|\}\,\colon\,x,y\in S_X\},\quad\mbox{and}\\
     \lambda(X)&=\inf\{\max\{\|x-y\|,\|x+y\|\}\,\colon\,x,y\in S_X\}.
\end{align*}

If $X$ is a Banach lattice, then similar parameters may be defined using its order structure. For instance, \cite{lifsic} and \cite{tsek} defined the \textit{Riesz angle} of a Banach lattice $X$:
    \begin{gather*}
        \alpha(X)=\sup\{\|x\vee y\|\,\colon\,x,y\in B_X^+\}.
\end{gather*}
This parameter was used in \cite{borwein-sims} to study fixed point properties.

In this paper we shall focus on the following parameter, defined in \cite{galego-me}.

\begin{definition}
    Let $X$ be a Banach lattice. The lattice Sch\"affer constant of $X$ is the number
    \begin{gather*}
    \lambda^+(X)=\inf\{\max\{\|x-y\|,\|x+y\|\}\,\colon\,x,y\in S_X^+\}.
\end{gather*}
\end{definition}

Clearly $\lambda^+({\mathbb R}) = 2$ and $\alpha({\mathbb R}) = 1$. We shall be concerned with lattices of dimension at least $2$.
Begin by providing alternative formulas for $\lambda^+(X)$ and $\alpha(X)$ (which also make clear that $\alpha(X)$ is the lattice counterpart of the James constant $J(X)$).

%The next result shows formulas for $\lambda^+(X)$ and $\alpha(X)$. It is worth noting that the formula for $\alpha(X)$ given in the result shows that the lattice version of the James constant, $J(X)$, is precisely $\alpha(X)$. 

\begin{proposition}\label{formulas for lambda and alpha}
    Let $X$ be a Banach lattice, with $\dim X \geq 2$. Then
    \begin{align}\label{formulas for lambda}
         \lambda^+(X)&=\inf\{\|x+y\|\,\colon\,x,y\in S_X^+\}\nonumber\\
         &=\inf\{\|\,|x|+|y|\,\|\,\colon\,x,y\in S_X\}\\
         &=\inf\{\|\,|x-y|\vee|x+y|\,\|\,\colon\,x,y\in S_X\}\nonumber\\
     &=\inf\{\|x+y\|\,\colon\,x,y\in X^+,\,\min\{\|x\|,\|y\|\}\geq1\}.\nonumber
    \end{align}
    Also, we have
     \begin{align}\label{formulas for alpha}
         \alpha(X)
         &=\sup\{\|x\vee y\|\,\colon\,x,y\in B_X^+, x \wedge y = {\bf0}\}\nonumber\\
         &=\sup\{\|x-y\|\,\colon\,x,y\in S_X^+\}\nonumber\\
         &=\sup\{\|\,|x|-|y|\,\|\,\colon\,x,y\in S_X\}\\
         &=\sup\{\|\,|x-y|\wedge|x+y|\,\|\,\colon\,x,y\in S_X\}\nonumber\\
          &=\sup\{\|\,|x-y|\wedge|x+y|\,\|\,\colon\,x,y\in B_X\}.\nonumber
    \end{align}
\end{proposition}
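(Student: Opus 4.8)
The plan is to prove the two strings of equalities separately, moving from the original definitions of $\lambda^+(X)$ and $\alpha(X)$ through each successive reformulation, and exploiting two elementary lattice identities throughout: first, that for $x,y\in X$ one has $|x-y|\vee|x+y| = |x|+|y|$ and $|x-y|\wedge|x+y| = \bigl||x|-|y|\bigr|$ (both are consequences of the Birkhoff inequalities, or can be checked on the sublattice generated by $x,y$); and second, the normalization trick that any nonzero positive vector can be scaled to the unit sphere. For the $\lambda^+$ string, I would first observe $\max\{\|x+y\|,\|x-y\|\}\ge\|x+y\|$ when $x,y\ge{\bf0}$, which gives $\lambda^+(X)\ge\inf\{\|x+y\|\colon x,y\in S_X^+\}$; the reverse inequality is immediate since for $x,y\in S_X^+$ we have $\|x-y\|\le\|x\vee y\|\le\|x+y\|$ (using $0\le x\vee y\le x+y$ and $|x-y|\le x\vee y$), so the $\max$ is always attained at $\|x+y\|$. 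Hence $\lambda^+(X)=\inf\{\|x+y\|\colon x,y\in S_X^+\}$. Passing to the second line, given $x,y\in S_X$ the vectors $|x|,|y|\in S_X^+$ and $\||x|+|y|\|=\||x-y|\vee|x+y|\|$ by the first identity, giving the third line as well; conversely every pair in $S_X^+$ is already of the form $|x|,|y|$, so these infima coincide. For the last line, the inequality $\ge$ is clear since $S_X^+\subseteq\{x\in X^+:\|x\|\ge1\}$; for $\le$, given $x,y\in X^+$ with $\|x\|,\|y\|\ge1$ put $x'=x/\|x\|,y'=y/\|y\|\in S_X^+$ and note $x'+y'\le x+y$ by positivity and monotonicity of the norm, so $\|x'+y'\|\le\|x+y\|$; taking infima finishes the $\lambda^+$ block.

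For the $\alpha(X)$ string I would argue dually. Starting from $\alpha(X)=\sup\{\|x\vee y\|\colon x,y\in B_X^+\}$, the first new line restricts to disjoint $x,y$: given any $x,y\in B_X^+$ set $u=(x-y)^+$ and $v=(y-x)^+$, which are disjoint, satisfy $u,v\in B_X^+$ (as $u\le x$, $v\le y$), and $u\vee v=|x-y|$; but also $x\vee y = x\wedge y + |x-y|$, and since $x\wedge y\ge{\bf0}$ this does not obviously help — so instead I would use that $x\vee y \le (x\vee y)$ and note $u\vee v = u+v$ while $x\vee y$ dominates $u\vee v$ only after adding $x\wedge y$. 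The cleaner route for line one is: replace $(x,y)$ by $(x-x\wedge y,\,y-x\wedge y)$, which are disjoint, still lie in $B_X^+$ (each is $\le$ the original), and whose join equals $x\vee y - x\wedge y\le x\vee y$; taking the sup over disjoint pairs can therefore only decrease $\alpha$, but it also trivially cannot exceed it, so equality holds — wait, decreasing the join when we subtract $x\wedge y$ shows $\sup_{\text{disjoint}}\le\alpha(X)$, and $\ge$ is automatic. Next, for $x,y\in S_X^+$ we have $\|x-y\|\le\|x\vee y\|\le\alpha(X)$, so $\sup\{\|x-y\|\colon x,y\in S_X^+\}\le\alpha(X)$; conversely, given $x,y\in B_X^+$ disjoint with $\|x\vee y\|$ near $\alpha(X)$, scale each to the sphere (WLOG both nonzero, using $\dim X\ge2$) — if $x'=x/\|x\|\ge x$ and $y'=y/\|y\|\ge y$ then $\|x'-y'\| = \|x'\vee y'\|\ge\|x\vee y\|$ since disjointness is preserved and $x'\vee y' = x'+y'\ge x+y = x\vee y$. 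This yields the second line. The third line follows exactly as before from $\bigl||x|-|y|\bigr| = |x-y|\wedge|x+y|$ together with $|x|,|y|\in S_X^+$. For the fourth line one uses the identity directly: $\sup\{\||x-y|\wedge|x+y|\|\colon x,y\in S_X\} = \sup\{\|\,|x|-|y|\,\|\colon x,y\in S_X\}$. Finally the fifth line extends the sup from $S_X$ to $B_X$; $\ge$ is trivial, and for $\le$ one scales $x,y\in B_X$ outward to the sphere and checks that $\bigl||x|-|y|\bigr|$ does not decrease — this is the one slightly delicate point and I would handle it by the substitution $x\mapsto x/\|x\|$ applied to the larger-norm coordinate only, or by a direct estimate $\bigl|\,|sx|-|ty|\,\bigr| \ge \bigl|\,|x|-|y|\,\bigr|$ for $s,t\ge1$ which follows from $|sx|=s|x|\ge|x|$ and a case analysis.

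The main obstacle I anticipate is precisely this last monotonicity-under-scaling point for the $B_X$ versions: unlike the positive-cone case, $\bigl|\,|x|-|y|\,\bigr|$ is not obviously monotone when we enlarge $|x|$ and $|y|$ simultaneously, so one cannot simply normalize both vectors at once. The fix is to normalize them one at a time (enlarging $|x|$ to the sphere while freezing $y$, using that $s\mapsto \bigl|\,s|x|-|y|\,\bigr|$ is, pointwise, a convex function of $s$ and hence its lattice norm is nondecreasing past the point where $s|x|\ge|y|$ fails — or more simply, reduce to the disjoint decomposition $|x|-|y| = |x|\wedge|y|^c$-type pieces where monotonicity is transparent). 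Everything else is bookkeeping with the two lattice identities and the order-monotonicity of the norm; the hypothesis $\dim X\ge2$ is used only to guarantee the existence of nonzero positive vectors of norm $1$ so that the normalizations are legitimate.
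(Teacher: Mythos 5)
Your treatment of the $\lambda^+$ string is correct and essentially identical to the paper's: the observation that $|x-y|\leq x+y$ for positive $x,y$ collapses the maximum, the two lattice identities handle the middle lines, and the one-sided normalization $x\mapsto x/\|x\|\leq x$ (valid because $\|x\|\geq 1$) handles the last line. The convexity argument you sketch for extending the last supremum in \eqref{formulas for alpha} from $S_X$ to $B_X$ is also the paper's argument (the map $t\mapsto\|\,|x-ty|\wedge|x+ty|\,\|$ is convex and even, hence non-decreasing on $[0,\infty)$), so that part is fine.

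The genuine gap is in the very first equality of \eqref{formulas for alpha}, the reduction of $\alpha(X)$ to \emph{disjoint} pairs, and it propagates to the second line. Your substitution $(x,y)\mapsto(x-x\wedge y,\,y-x\wedge y)$ produces a disjoint pair whose join $x\vee y-x\wedge y$ is \emph{smaller} than $x\vee y$; this only re-derives the trivial inequality $\sup_{\text{disjoint}}\|x\vee y\|\leq\alpha(X)$ (trivial because the supremum is over a subset). You then assert that ``$\geq$ is automatic,'' but you have the directions reversed: $\geq$ is precisely the non-trivial content, namely that restricting to disjoint pairs does not lower the supremum. This is not an elementary fact. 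If $X$ had the principal projection property one could split $x\vee y$ into disjoint components via the band projection onto the band generated by $(x-y)^+$, but a general Banach lattice need not admit such projections; the paper instead invokes $\alpha(X)=\alpha(X^{**})$ (Tsekrekos/Wnuk), uses $\sigma$-order completeness of $X^{**}$ to find near-optimal disjoint elements there (Wnuk), and pulls them back to $X$ by Bernau's lattice local reflexivity. Without some argument of this kind your proof of the first line fails, and your proof of the second line, $\alpha(X)=\sup\{\|x-y\|\colon x,y\in S_X^+\}$, fails with it, since it begins by choosing a disjoint pair in $B_X^+$ with $\|x\vee y\|$ near $\alpha(X)$ --- exactly what the first line is supposed to supply.
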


\begin{proof}
\eqref{formulas for lambda}:  The first identity follows because $|x-y|\leq x+y$ for all $x,y\in X^+$. The second one is consequence from the fact
  that $\|\,|x|\,\|=\|x\|$ for each $x\in X$. Now, since $|x|+|y|=|x-y|\vee|x+y|$ for all $x,y\in X$, we obtain
  the third equality. Finally, if $D(X)=\inf\{\|x+y\|\,\colon\,x,y\in X^+,\,\min\{\|x\|,\|y\|\}\geq1\}$, then 
  $D(X)\leq\lambda^+(X)$. To prove the opposite inequality, if $\varepsilon>0$ is given, there exist $x,y\in X^+$ with 
  $\min\{\|x\|,\|y\|\}\geq1\}$ such that $\|x+y\|<D(X)+\varepsilon$. Notice that
  \begin{gather*}
      \lambda^+(X)\leq\left\|\frac{x}{\|x\|}+\frac{y}{\|y\|}\right\|\leq\|x+y\|<D(X)+\varepsilon.
  \end{gather*}
  From the arbitrariness of $\varepsilon$ we conclude that $\lambda^+(X)\leq D(X)$.

\eqref{formulas for alpha}: Clearly $\alpha(X) \geq \sup\{\|x\vee y\|\,\colon\,x,y\in B_X^+, x \wedge y = {\bf0}\}$. To prove the converse, recall that, by \cite{tsek} (see \cite{wnuk} for a different proof), $\alpha(X) = \alpha(X^{**})$. Being a dual space, $X^{**}$ is $\sigma$-order complete, hence by \cite{wnuk}, for any $\varepsilon > 0$ there exist disjoint $u,v\in B_{X^{**}}^+$ so that $\alpha(X^{**})<\|u\vee v\|+\varepsilon$.
By \cite{bernau}, for such $u,v$ there exist disjoint $x,y \in B_X^+$ so that $\|x\vee y\|+\varepsilon > \|u\vee v\|$. Thus, $\alpha(X) = \alpha(X^{**})<\|x\vee y\|+2\varepsilon$, and we obtain the first formula for $\alpha(X)$ by taking $\varepsilon$ arbitrarily small.

%  \begin{gather}\label{angulo de riesz con elementos disjuntos}     \alpha(X)=\sup\{\|x+y\|\,\colon\,x,y\in S_X^+,\,x\wedge y={\bf0}\}.  \end{gather}
  Since $x+y=|x-y|$ for all $x,y\in X^+$ with $x\wedge y={\bf0}$, the second equation holds. The third equation follows since $||x|-|y||=|x-y|\wedge|x+y|$ for all $x,y\in X$; a similar reasoning can be used for the fourth one.
  To prove the last equation, let $C(X)=\sup\{\|\,|x-y|\wedge|x+y|\,\|\,\colon\,x,y\in B_X\}$. Clearly $\alpha(X)\leq C(X)$. For the reverse inequality, notice that for each $x,y\in X$, the map $t\in\mathbb R\mapsto\|\,|x-ty|\wedge|x+ty|\,\|$ is convex and even, and hence it is non-decreasing. Thus if  and $\varepsilon>0$ is given, we have for some $x,y\in B_X\setminus\{{\bf0}\}$ that 
    \begin{align*}
        C(X)-\varepsilon&<\|\,|x-y|\wedge|x+y|\,\|\\
        &\leq\left\|\,\left|x-\frac{y}{\|y\|}\right|\wedge\left|x+\frac{y}{\|y\|}\right|\,\right\|\\
        &\leq\left\|\,\left|\frac{x}{\|x\|}-\frac{y}{\|y\|}\right|\wedge\left|\frac{x}{\|x\|}+\frac{y}{\|y\|}\right|\,\right\|\\
        &\leq \alpha(X).
    \end{align*}
    Therefore, by taking $\varepsilon\to0^+$ we obtain that $C(X)\leq\alpha(X)$. 
\end{proof}

From the previous proposition, we obtain:

\begin{corollary}\label{inequalities involving lambda, alpha and james}
    Let $X$ be a Banach lattice. Then
    \begin{gather*}
        \lambda(X)\leq\lambda^+(X)\leq\alpha(X)\leq J(X).
    \end{gather*}
    Consequently, if $\lambda(X)=\sqrt{2}$, then $\lambda^+(X)=\alpha(X)=\sqrt{2}$.
\end{corollary}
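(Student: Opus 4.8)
The plan is to obtain the chain $\lambda(X)\le\lambda^+(X)\le\alpha(X)\le J(X)$ by reading its three inequalities off Proposition~\ref{formulas for lambda and alpha} together with a few elementary lattice inequalities, and then to derive the last assertion from the scalar estimate $\lambda(X)\,\alpha(X)\le 2$. Everything except this last estimate is bookkeeping; in particular the only real choice to make is which of the equivalent formulas in Proposition~\ref{formulas for lambda and alpha} renders each individual inequality shortest.

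For $\lambda(X)\le\lambda^+(X)$ there is nothing to prove: the infimum defining $\lambda^+(X)$ runs over $S_X^+\times S_X^+\subseteq S_X\times S_X$, as already noted in the Introduction. For $\lambda^+(X)\le\alpha(X)$ I would use $\dim X\ge 2$: a Banach lattice with no nonzero pair of disjoint elements is linearly ordered, hence one-dimensional (Banach lattices being Archimedean), so $X$ contains disjoint $x_0,y_0\in S_X^+$. For such $x_0,y_0$ one has $x_0+y_0=x_0\vee y_0$, so by the first identity in~\eqref{formulas for lambda} and the definition of $\alpha$, $\lambda^+(X)\le\|x_0+y_0\|=\|x_0\vee y_0\|\le\alpha(X)$. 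For $\alpha(X)\le J(X)$ I would use the formula $\alpha(X)=\sup\{\|\,|x-y|\wedge|x+y|\,\|:x,y\in S_X\}$ from~\eqref{formulas for alpha}: for every $x,y\in S_X$ we have $|x-y|\wedge|x+y|\le|x-y|$ and $|x-y|\wedge|x+y|\le|x+y|$, hence $\|\,|x-y|\wedge|x+y|\,\|\le\min\{\|x-y\|,\|x+y\|\}$, and taking the supremum over $x,y\in S_X$ yields $\alpha(X)\le J(X)$.

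For the final assertion I would first establish $\lambda(X)\,\alpha(X)\le 2$. Fix $x,y\in S_X^+$ with $b\coloneqq\|x-y\|>0$ and set $a\coloneqq\|x+y\|$; since $x,y\ge{\bf0}$ we have $|x-y|\le x+y$, hence $b\le a\le 2$. Then $u\coloneqq(x+y)/a$ and $v\coloneqq(x-y)/b$ lie in $S_X$, and $u+v=(\tfrac1a+\tfrac1b)x+(\tfrac1a-\tfrac1b)y$, $u-v=(\tfrac1a-\tfrac1b)x+(\tfrac1a+\tfrac1b)y$; since $\tfrac1a\le\tfrac1b$ and $\|x\|=\|y\|=1$, both $\|u+v\|$ and $\|u-v\|$ are at most $(\tfrac1a+\tfrac1b)+(\tfrac1b-\tfrac1a)=\tfrac2b$. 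Thus $\lambda(X)\le 2/b=2/\|x-y\|$, and taking the supremum over $x,y\in S_X^+$ together with $\alpha(X)=\sup\{\|x-y\|:x,y\in S_X^+\}$ (Proposition~\ref{formulas for lambda and alpha}) gives $\lambda(X)\,\alpha(X)\le 2$. Consequently, if $\lambda(X)=\sqrt2$ then $\alpha(X)\le\sqrt2$, while the chain already proved gives $\alpha(X)\ge\lambda^+(X)\ge\lambda(X)=\sqrt2$; hence $\lambda^+(X)=\alpha(X)=\sqrt2$. The one point requiring care is the substitution $u=(x+y)/\|x+y\|$, $v=(x-y)/\|x-y\|$: it is what turns a positive pair nearly realizing $\alpha(X)$ into unit vectors forcing $\max\{\|u+v\|,\|u-v\|\}$ to be small. (Alternatively one could invoke the classical facts $\lambda(X)\,J(X)\le 2$ and $J(X)\ge\sqrt2$ for $\dim X\ge 2$, but proving the product bound for $\alpha$ directly keeps the argument self-contained.)
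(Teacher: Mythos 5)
Your proof is correct. For the chain of inequalities you follow essentially the same route as the paper: the first inequality is the trivial inclusion of index sets, the middle one is exactly the paper's argument (disjoint $x_0,y_0\in S_X^+$ give $x_0+y_0=x_0\vee y_0$, and your justification that such a pair exists when $\dim X\geq 2$ is a worthwhile addition, since the inequality $\lambda^+\leq\alpha$ genuinely fails for $X=\mathbb R$), and the last one is the routine comparison $\|\,|x-y|\wedge|x+y|\,\|\leq\min\{\|x-y\|,\|x+y\|\}$ that the paper dismisses as ``similar.'' Where you genuinely diverge is the ``consequently'' statement: the paper simply cites the identity $J(X)\lambda(X)=2$ from Kato--Maligranda--Takahashi, so that $\lambda(X)=\sqrt2$ forces $J(X)=\sqrt2$ and the whole chain collapses, whereas you prove the product bound $\lambda(X)\,\alpha(X)\leq 2$ directly via the substitution $u=(x+y)/\|x+y\|$, $v=(x-y)/\|x-y\|$ and the coefficient computation $\|u\pm v\|\leq 2/\|x-y\|$ (using $\|x-y\|\leq\|x+y\|$ for positive $x,y$, and the harmless restriction to pairs with $x\neq y$). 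Your version is self-contained and in fact establishes a lattice analogue of the classical identity that is of independent interest; the paper's version is shorter but imports an external theorem. Both sandwiching arguments then conclude identically.
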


\begin{proof}
We will check the middle inequality of the centered formula, since other paers are similar.     If $x,y\in X^+$ satisfy $\|x\|=\|y\|=1$ and $x\wedge y={\bf0}$, since $x+y=x\vee y+x\wedge y=x\vee y$, we have
\begin{gather*}
    \lambda^+(X)\leq\|x+y\|=\|x\vee y\|\leq \alpha(X).
\end{gather*}
For the ``consequently'' statement, recall that $J(X)\lambda(X)=2$ \cite[Theorem 2]{kato-maligranda-takahashi}.
% Now since $J(X)\lambda(X)=2$ \cite[Theorem 2]{kato-maligranda-takahashi}, the second statement follows.
\end{proof}

Now in connection with the lattice Sch\"affer constant, we introduce another parameter for the Banach lattice $X$ with $\dim X \geq 2$, namely
 \begin{gather*}
        \beta(X)=\inf\{\|x\vee y\|\,\colon\,x,y\in S_X^+,\, x\wedge y={\bf0}\}=\inf\{\|x+ y\|\,\colon\,x,y\in S_X^+,\, x\wedge y={\bf0}\}.
    \end{gather*}
Clearly, $\lambda^+(X)\leq\beta(X) \leq \alpha(X)$ for every Banach lattice $X$. By Corollary \ref{inequalities involving lambda, alpha and james}, $\lambda^+(X) = \beta(X) = \alpha(X)$ when $\lambda(X)=\sqrt{2}$. The equality $\lambda^+(X) = \beta(X)$ also holds in the two-dimensional case:

\begin{proposition}\label{2-dim}
If $X$ is a two-dimensional Banach lattice, then $\beta(X) = \lambda^+(X)$.
\end{proposition}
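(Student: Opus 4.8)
Since $\lambda^+(X) \leq \beta(X)$ always holds, the content is the reverse inequality $\beta(X) \leq \lambda^+(X)$ for $\dim X = 2$. The plan is to exploit the fact that a two-dimensional Banach lattice has an essentially unique lattice structure: up to lattice isometry, $X$ is $\mathbb{R}^2$ with the coordinatewise order and some lattice norm $\|\cdot\|$, whose unit sphere in the positive quadrant is a concave curve from $(a,0)$ to $(0,b)$. The key structural observation is that in $S_X^+$, the \emph{only} pair of disjoint unit vectors is $\{(a,0)/\|(a,0)\|, (0,b)/\|(0,b)\|\} = \{e_1/\|e_1\|, e_2/\|e_2\|\}$ (after normalizing so $\|e_1\| = \|e_2\| = 1$, i.e. $a = b = 1$, which we may do by rescaling the coordinates — this is a lattice isometry onto another lattice with the same constants). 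So $\beta(X) = \|e_1 \vee e_2\| = \|e_1 + e_2\|$, a single number, whereas $\lambda^+(X)$ is an infimum over all of $S_X^+ \times S_X^+$.

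First I would reduce to this normal form: pick the lattice isometry identifying $X$ with $(\mathbb{R}^2, \text{coordinatewise order})$, then rescale coordinates so that $\|e_1\| = \|e_2\| = 1$; both operations are lattice isometries and preserve $\lambda^+$ and $\beta$. Then $\beta(X) = \|e_1 + e_2\|$. Next I would show $\lambda^+(X) \geq \|e_1 + e_2\|$ by taking an arbitrary pair $x, y \in S_X^+$ and bounding $\|x + y\|$ from below. Write $x = (x_1, x_2)$, $y = (y_1, y_2)$ with all coordinates $\geq 0$. The idea is that $x + y = (x_1 + y_1, x_2 + y_2) \geq (\text{something})\cdot e_1 + (\text{something})\cdot e_2$ and one uses monotonicity of the norm together with the constraint $\|x\| = \|y\| = 1$. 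Concretely, since the unit ball's positive part is the region below a concave curve through $e_1$ and $e_2$, any $x \in S_X^+$ satisfies $x_1 + x_2 \geq 1$ (the curve lies above the segment $[e_1, e_2]$), hence $x + y$ dominates, coordinatewise, a suitable convex combination... — actually the cleanest route: $x + y \geq x \vee y$, and I claim $\|x \vee y\| \geq \|e_1 + e_2\|$ is false in general, so instead use $x+y$ directly. Since $x_1, y_1, x_2, y_2 \geq 0$ with $(x_1,x_2), (y_1,y_2) \in S_X^+$, and $S_X^+$ lies weakly below the curve, we get $x+y \geq$ the point $(\,x_1 + y_1,\, x_2 + y_2\,)$ which, by concavity of the sphere and $x_1 + x_2 \ge 1$, $y_1 + y_2 \ge 1$, dominates $e_1 + e_2$ coordinatewise \emph{after} noting $x_1+y_1 \ge ?$... this needs care, so the monotonicity-of-norm argument must be set up so the comparison vector is genuinely $\geq e_1 + e_2$ or has norm $\geq \|e_1+e_2\|$.

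The main obstacle, and where I would concentrate the work, is precisely this last comparison: showing that for all $x, y \in S_X^+$ one has $\|x + y\| \geq \|e_1 + e_2\|$. The honest way: parametrize $S_X^+$ as the graph of a concave decreasing function $f:[0,1]\to[0,1]$ with $f(0) = 1$, $f(1) = 0$, where $x \in S_X^+$ iff $x = (t, f(t))$ for $t \in [0,1]$ — wait, that's only the boundary; but for the infimum of $\|x+y\|$ we may assume $x, y$ are on the sphere. Given $x = (s, f(s))$, $y = (t, f(t))$, we have $x + y = (s+t, f(s)+f(t))$, and by concavity $f(s) + f(t) \geq 2f(\tfrac{s+t}{2})$, so $x + y \geq (s+t,\, 2f(\tfrac{s+t}{2}))$. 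Setting $u = (s+t)/2 \in [0,1]$, this is $\geq 2(u, f(u)) - (0, \text{nothing})$... hmm, $x+y \ge (2u, 2f(u)) = 2(u,f(u))$, whose norm is $2 \cdot 1 = 2 \ge \|e_1+e_2\|$? That can't be right since $\lambda^+ \le 2$ always with equality only for $L$-spaces. The error: $f(s) + f(t) \ge 2f(\frac{s+t}{2})$ gives the wrong comparison point because I also need to compare first coordinates — $x+y = (s+t, f(s)+f(t))$ and I want to show its norm is $\ge \|(1,1)\| = \|e_1+e_2\|$; since $(s+t, f(s)+f(t)) \ge (s+t, 2f(\frac{s+t}{2}))$ coordinatewise and the latter equals $2(u, f(u))$ with $\|(u,f(u))\| \le 1$ — no, $(u, f(u))$ is ON the sphere so has norm exactly $1$, giving norm $2$, contradiction. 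So the subtlety is that $(u, f(u))$ for $u \in (0,1)$ need not have norm $1$ unless $f$ is exactly the "sphere function" — and it IS by construction. Resolution: I must have the geometry backwards; the positive sphere being concave means $f$ concave is wrong — for $\ell_p^2$, $f(t) = (1-t^p)^{1/p}$ is concave for $p \ge 1$, and $(u, f(u))$ does have norm $1$, and $\|(2u, 2f(u))\|= 2$. But $\|x+y\|$ for $x=e_1, y=e_2$ in $\ell_1^2$ is $\|(1,1)\|_1 = 2$; in $\ell_2^2$ it's $\sqrt 2 < 2$. So for $\ell_2^2$, $x = y = (2^{-1/2}, 2^{-1/2})$ gives $\|x+y\| = \sqrt 2$, and my claimed bound "$\ge 2$" is just wrong — because $x+y = (\sqrt 2, \sqrt 2) = \sqrt 2 \cdot (1,1)$ has norm $\sqrt 2$, not matching $2(u,f(u))$ with $u = 2^{-1/2}$: indeed $(u, f(u)) = (2^{-1/2}, 2^{-1/2})$, norm $1$, times $2$ is $(\sqrt 2, \sqrt 2)$... norm $2$?! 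No: $\|(\sqrt 2, \sqrt 2)\|_2 = \sqrt{2 + 2} = 2$. But $\|x + y\|$ with $x = y$ equal to that vector is $\|2x\| = 2\|x\| = 2 \cdot 1 = 2$, not $\sqrt 2$. I conflated two things. OK — so the bound $\|x+y\| \ge 2f(\frac{s+t}{2})$-type reasoning actually shows $\|x+y\|$ is minimized when $x,y$ are the two extreme disjoint vectors, giving $\beta$. Let me just state: the core lemma is $\|x+y\| \ge \|e_1 + e_2\|$ for $x,y \in S_X^+$, proved via the concavity parametrization — given $x = (s, f(s))$, $y = (t, f(t))$ on the positive sphere, show $(s+t, f(s)+f(t))$ has norm $\ge \|(1,1)\|$ using that along the segment from $2x$ to $2y$ (both on the sphere of radius $2$), the midpoint... the function $r \mapsto \|(\text{point on sphere of radius }2)\|$ — this is where I'd do the real work, likely by showing the minimum of $\|x+y\|$ over $x,y \in S_X^+$ is attained at $\{e_1, e_2\}$ by a direct convexity/monotonicity argument on the two coordinates, using that $g(t) := \|(t, c - t)\|$ type quantities behave monotonically. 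I would present this carefully as the heart of the proof, then conclude $\lambda^+(X) \ge \|e_1+e_2\| = \beta(X)$.
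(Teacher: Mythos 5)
Your reduction is correct and coincides with the paper's setup: after identifying $X$ with $\mathbb R^2$ (coordinatewise order) and rescaling so that $\|e_1\|=\|e_2\|=1$, the only disjoint pair in $S_X^+$ is $\{e_1,e_2\}$, so $\beta(X)=\|(1,1)\|$, and the whole content is the inequality $\|x+y\|\geq\|(1,1)\|$ for all $x,y\in S_X^+$. But that inequality is exactly the part you do not prove, and the argument you sketch for it fails. Writing $S_X^+$ as the graph of a concave decreasing $f$ with $f(0)=f(1)=1-$wait$-f(0)=1$, $f(1)=0$, concavity gives $f(s)+f(t)\leq 2f\bigl(\tfrac{s+t}{2}\bigr)$, i.e.\ $x+y\leq 2\bigl(u,f(u)\bigr)$ with $u=(s+t)/2$; this yields only the trivial upper bound $\|x+y\|\leq 2$, not a lower bound. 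You noticed the contradiction with $\ell_2^2$ mid-argument but never resolved it, and the proposal ends with ``this is where I'd do the real work'' --- so the heart of the proof is missing. The hoped-for shortcut ``the minimum of $\|x+y\|$ is attained at the disjoint pair by a direct convexity/monotonicity argument'' is not available for free: the norm restricted to the translated concave arc $y+S_X^+$ is a convex function composed with a concave curve, which need not be monotone or attain its minimum at an endpoint, and indeed the analogous statement is \emph{false} in dimension $3$ (Example \ref{counterexample}), so any correct proof must use two-dimensionality in an essential, nontrivial way.

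For comparison, the paper's proof is genuinely variational: it perturbs the ball to make the norm smooth, uses compactness to produce minimizers $x,y\in S_X^+$ of $\|x+y\|$, derives the first-order condition that the norming functional of $x$ coincides with that of $z=x+y$ (otherwise a small admissible perturbation of $x$ within $S_X^+$ decreases $\|x+y\|$, contradicting minimality), and then splits into cases: if both minimizers have full support one gets $\lambda^+(X)=2$ and $X=\ell_1^2$; if one of them is $e_1$ one carries out an explicit computation with the norming functional (separately for $a\leq b$ and $a>b$) to conclude $\|(1,1)\|\leq\|(a+1,b)\|=\lambda^+(X)$. Some argument of this calibre --- or another complete proof of the key inequality --- is needed; as it stands your proposal establishes only the easy inequality $\lambda^+(X)\leq\beta(X)$ plus the (correct) identification $\beta(X)=\|(1,1)\|$.
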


\begin{proof}
We can and do identify $X$ with $\mathbb R^2$ (see \cite[Section II.3]{sch74}), and assume the normalization $\|(1,0)\| = \|(0,1)\| = 1$. Clearly $\beta(X) = \|(1,1)\|$. We shall show that $\lambda^+(X) \geq \|(1,1)\|$; as we already know that $\lambda^+(X) \leq \beta(X)$, our goal will then be reached.

Suppose, for the sake of contradiction, that $\beta(X) > \lambda^+(X)$. By replacing $B_X$ with $B_X + \varepsilon B_{\ell_2^2}$, and scaling to preserve normalization, we can assume that $X$ is smooth, that is, for every $x \in X\backslash\{0\}$ there exists a unique $f \in S_{X^*}$ with $f(x) = \|x\|$; write $f_x = f$. It is easy to observe that, if $x$ is positive, then so is $f_x$. By e.g. \cite[Chapter 2]{Diestel}, for any $y \in X$, $\|x+ty\| = \|x\| + t f_x(y) + o(t)$ as $t \to 0$. 

By compactness, we can find $x,y \in S_X^+$ so that $\|x+y\| = \lambda^+(X)$. As $x$ and $y$ are not disjoint, at least one of them, say $x$, has full support.

Notice that $f_x = f_z$, where $z = x+y$. Indeed, otherwise, pick $u$ with $f_x(u) = 0$ and $f_z(u) < 0$. For $t$ small, $x+tu$ is still positive, and $\|x+tu\| = 1 + o(t)$. Take $x'_t = (x+tu)/\|x+tu\|$, then $x'_t = 1 + tu + o(t)$, and $\|x'_t + y\| = 1 + t f_z(u) + o(t)$, which is less than $\|x+y\| = \lambda^+(X)$ for $t$ small, which cannot happen. 

Thus, $f_x = f_z$. If $y$ has full support, we similarly conclude that $f_y = f_z$, hence $\lambda^+(X) = f_z(x+y) = \|x\| + \|y\| = 2$. By Theorem \ref{characterization of banach lattices having l=2} below, $X = \ell_1^2$, and $\lambda^+(X) = 2 = \|(1,1)\| = \beta(X)$.

Now suppose $y$ does not have full support, say $x = (a,b)$ and $y = (1,0)$, with $a,b > 0$. As noted above, for $x = (a,b)$ and $z = (a+1,b)$ we have $f_x = f_z = f = (u,v)$. We shall show that $\|(1,1)\| \leq \|(a+1,b)\|$, which then implies $\beta(X) = \|(1,1)\| \leq \lambda^+(X)$.

Consider first the case of $a \leq b$. Observe that $f$ norms both $(a,b)$ and $(a+1,b)$, hence also $(a',b') = (a+1,b)/\|(a+1,b)\|$. For $t \in [0,1]$,
$$
1 \geq \|(1-t) (a,b) + t(a',b')\| \geq f \big( (1-t) (a,b) + t(a',b') \big) = 1 ,
$$
hence all the ``$\geq$'' signs above are in fact equalities. As $a' \geq b'$, one find $t$ so that $(1-t) (a,b) + t(a',b')$ is a multiple of $(1,1)$, hence $f$ also norms $(1,1)$. That is, $\|(1,1)\| = f((1,1)) = u+v$. Clearly $v \leq 1$, and $1 = \|x\| = f(x) = au+bv$, hence
$$
\|(1,1)\| = f \big((1,1)\big) = u+v \leq 1+u = (au+bv) + u = f \big((a+1,b)\big) = \|(a+1,b)\| .
$$

Now suppose $a > b$. Let $w = (1-b)/a$ (note that $a \vee b \leq 1 \leq a+b$, hence $0 \leq w \leq 1$), and consider $g = (u,1) \in X^*$. Then $g((a,b)) = 1 = g((0,1))$.

Note that $(0,1), (a,b) \in S_X$. Denote by $L$ the straight line passing through these two points, then $g|_L = 1$. Further, denote by $S$ the segment connecting $(0,1)$ with $(a,b)$. By convexity, $S \subset B_X$, while $L \backslash S$ does not meet the interior of $B_X$.

As $a < b$, $S$ contains $(1,1)/t$, for some $t > 1$. Then $\|(1,1)\| \leq t$. On the other hand, $1 = g((1,1)/t)$, so $t = g((1,1)) = w+1$, hence $\|(1,1)\| \leq w+1$.

Now find $s>1$ so that $(a+1,b)/s \in L \backslash S$. Then $\|(a+1,b)\| \geq s$. However,
$1 = g((a+1,b))/s$, and so, $s = g((a+1,b)) = (a+1)w + b = (aw+b) + w = 1 + w$. Thus, $\|(a+1,b)\| \geq 1+w \geq \|(1,1)\|$.
\end{proof}

Example \ref{counterexample} and Remark \ref{higher dim counterexample} show that Proposition \ref{2-dim} fails in dimensions higher than $2$.

\begin{example}\label{counterexample}
    There exists a $3$-dimensional Banach lattice $X$ for which $\lambda^+(X) < \beta(X)$. 

   Let $X=\mathbb R^3$ be endowed with the lattice norm
$$
\|(x,y,z)\| = \Big( |x| + \frac{|z|}2 \Big) \bigvee \Big( |y| + \frac{|z|}2 \Big) \bigvee \Big( \frac23 |x| + \frac23 |y| + \frac{|z|}3 \Big) \bigvee \frac56 \big( |x| + |y| \big) ,\quad (x,y,z)\in X.
$$
We show that
$$
\beta(X) = \frac{15}{11} > \frac43 \geq \lambda^+(X) .
$$
First we estimate $\lambda^+(X)$. Take norm one vectors
$$
u = \Big( \frac45, 0, \frac25 \Big)  \quad\mbox{and} \quad v = \Big( 0 , \frac45, \frac25 \Big) .
$$
Then $\|u+v\| = 4/3 \geq \lambda^+(X)$. Next we compute $\beta(X)$, by finding an upper estimate for $\|u+v\|$ when $u, v\in S_X^+$ and $u\wedge v={\bf0}$. By symmetry, it suffices to consider two cases: (i) $u = (a,b,0)$, $v = (0,0,2)$, and (ii) $u = (a,0,c)$, $v = (0,1,0)$.

Case (i): We have $a+b = 6/5$. Hence,
$$
\|u+v\| = \|(a,b,2)\| \geq \frac23(a+b) + \frac23 = \frac{22}{15} > \frac{15}{11} > \frac43 .
$$

Case (ii): Observe that $a+c/2 = 1$. Therefore,
$$
\|u+v\| = \|(a,1,c)\| \geq \Big( 1 + \frac{c}2 \Big) \bigvee \frac56 (a+1) .
$$
The minimum of the right hand side under the constraint $a+c/2 = 1$ (that is, $c/2 = 1-a$) occurs when
$$
\frac56 (a+1) = 1 + \frac{c}2 = 2 - a .
$$
Then $a = 7/11$, and we obtain $\|u+v\| = 15/11$. 
\end{example}

\begin{remark}\label{higher dim counterexample}
Suppose $X$ is as in Example \ref{counterexample}. Let $W = L_1(E,\Sigma,\mu)$ for some $\sigma$-finite measure space $(E,\Sigma,\mu)$, and let $Z = X \oplus_1 W$. Then $\lambda^+(Z) < \beta(Z)$.

To establish this, it suffices to show that $\lambda^+(Z) =\lambda^+(X)$ and $\beta(Z) = \beta(X)$.
To this end, pick $x, y \in S_Z^+$. Write $x = (x_1, x_2)$, with $x_1 \in X, x_2 \in W$. Similarly, we have $y = (y_1,y_2)$. Note that $\|x \vee y\| = \|x_1 \vee y_1\| + \|x_2\| + \|y_2\|$. % $ = \|x_1 \vee y_1\| +2 - \|x_1\| - \|y_1\|.$$

Consider $x' = x_1/\|x_1\|, y' = y_1/\|y_1\| \in S_X^+$. Then 
$$
\|x' \vee y'\| \leq \|x_1 \vee y_1\| + \|x' - x_1\| + \|y' - y_1\| = \|x_1 \vee y_1\| + \|x_2\| + \|y_2\| \leq \|x \vee y\| .
$$
If, in addition, $x$ and $y$ are disjoint, then so are $x'$ and $y'$. Thus, in the definition of $\lambda^+(Z)$ and $\beta(Z)$, we can restrict ourselves to taking the infimum over appropriate $x', y' \in X$.
\end{remark}

\begin{remark}
   Clearly $1 \leq \lambda^+(X) \leq \beta(X) \leq 2$ for any Banach lattice $X$, and Example \ref{counterexample} shows that the middle inequality can be strict. We do not know the maximal values of $\beta(X)/\lambda^+(X)$ or $\beta(X) - \lambda^+(X)$. 
\end{remark}

Now we show examples of $\lambda^+(X)$ for some particular Banach lattices $X$.

\begin{example}\label{example L_p}
    Let $p\in[1,\infty)$ and $(E,\Sigma,\mu)$ be a measure space.
    If $X=L_p(E,\Sigma,\mu)$, we have $\lambda^+(X)=\beta(X)=2^{1/p}$. Indeed, if $f,g\in S_X^+$, from
    the elementary inequality $(u+v)^p\geq u^p+v^p$ for all $u,v\geq0$ we obtain
    \begin{gather*}
        \|f+g\|^p=\int_X[f(t)+g(t)]^p\,d\mu\geq\int_Xf(t)^p\,d\mu+\int_Xg(t)^p\,d\mu=2.
    \end{gather*}
    Thus, $\lambda^+(X)\geq2^{1/p}$. Now, if $f\wedge g={\bf0}$, then $\|f+g\|^p=\|f\|^p+\|g\|^p=2$. 
    Whence, $\beta(X) \leq 2^{1/p}$. To summarize, $2^{1/p} \leq \lambda^+(X) \leq \beta(X) \leq 2^{1/p}$, yielding the desired result.
\end{example}

\begin{remark}
    Notice that if $1\leq p<2$ and $X=(\ell_p,\|\cdot\|_p)$, then $\lambda(X)=2^{1-1/p}<2^{1/p}=\lambda^+(X)$.
\end{remark}

\begin{example}\label{lambda^+(X)=sqrt{2} and X is not hilbert}
    Let $X=\mathbb R^2$ be endowed with the lattice norm
    \begin{gather*}
        \|(x,y)\|=\max\{\|(x,y)\|_\infty,\frac{1}{\sqrt{2}}\|(x,y)\|_1\}, \quad (x,y)\in X.
    \end{gather*}
    Since $\lambda(X)=\sqrt{2}$, from Corollary \ref{inequalities involving lambda, alpha and james} we obtain $\lambda^+(X)=\sqrt{2}$.
\end{example}

\begin{example}
    Let $1\leq a<\sqrt{2}$ and let $X=\mathbb R^2$ with the lattice norm
    \begin{gather*}
        \|x\|_a=\max\{\|x\|_2,a\|x\|_\infty\}, \quad x\in X.
    \end{gather*}
    From \cite[Theorem 2 and Example 5]{kato-maligranda-takahashi} we obtain $\lambda^+(X)\geq\lambda(X)=\sqrt{2}/a$.
    Moreover, if $x=(1/a,0)$ and  $y=(0,1/a)$, we have $x,y\in S_X^+$, $x\wedge y={\bf0}$ and $\|x+y\|=\sqrt{2}/a$. Therefore
    $\beta(X)\leq\sqrt{2}/a$. Thus, $\lambda^+(X)=\beta(X)=\sqrt{2}/a$. 
\end{example}

\section{Stability results}\label{stability}
In this section we show that $\beta$ and $\lambda^+$ are stable under small perturbations, as well as under passage to the second dual or to the ultrapower of a lattice.

To state our first result, recall that a Banach lattice $X$ has the Principal Projection Property (PPP for short) if for every $x\in X$, the principal band generated by $x$ is a projection band (see \cite[p. 38]{A-B} for facts concerning such lattices).

\begin{proposition}\label{+schaefer is determinated by finite dimensional subspaces}
    Let $X$ be a Banach lattice. If $X$ has the PPP, then
\begin{align}
    \lambda^+(X)&=\inf\{\lambda^+(Y)\,\colon\,\mbox{$Y$ is a finite dimensional sublattice of $X$}\}\label{eq:fin dim subsp},\quad\mbox{and}\\
    \beta(X)&=\inf\{\beta(Y)\,\colon\,\mbox{$Y$ is a finite dimensional sublattice of $X$}\}\label{eq:fin dim subsp beta}.
\end{align}
\end{proposition}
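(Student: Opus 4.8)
The inequality "$\leq$" is immediate in both cases: any finite dimensional sublattice $Y \subseteq X$ is itself a Banach lattice (with the inherited norm and order), and the defining infima for $\lambda^+(Y)$ and $\beta(Y)$ are taken over smaller sets of pairs $x,y$ (those living in $Y$), so $\lambda^+(X) \leq \lambda^+(Y)$ and $\beta(X) \leq \beta(Y)$. Taking the infimum over all such $Y$ preserves this. So the content is the reverse inequality: given $x,y \in S_X^+$ (with $x \wedge y = \mathbf{0}$ in the $\beta$ case) almost achieving the relevant infimum, I must produce a finite dimensional sublattice $Y$ containing vectors that do essentially as well inside $Y$.

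The natural candidate for $Y$ is the sublattice generated by $x$ and $y$, i.e.\ the closed sublattice generated by $\{x,y\}$; but this need not be finite dimensional. The plan is to use the PPP to approximate. First I would fix $\varepsilon > 0$ and pick $x, y \in S_X^+$ with $\|x+y\|$ within $\varepsilon$ of $\lambda^+(X)$ (respectively $\|x \vee y\|$ within $\varepsilon$ of $\beta(X)$, with $x \wedge y = \mathbf{0}$). Let $e = x \vee y$; working inside the principal ideal $I_e$ generated by $e$, which under PPP is an AM-space with order unit $e$ (by Kakutani's theorem it is lattice isometric to $C(K)$ for some compact Hausdorff $K$), one can approximate $x$ and $y$ uniformly — hence in norm, since on $I_e$ the AM-norm dominates a multiple of the ambient norm — by finitely-valued elements, i.e.\ elements of a finite dimensional sublattice $Y$ of $I_e$ spanned by finitely many components of $e$. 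Concretely: take a fine partition of unity $e = \sum_{i=1}^n p_i$ into pairwise disjoint components $p_i$ (available by PPP, since the band projections onto principal bands exist and the $p_i$ are obtained by partitioning $K$), and let $x', y'$ be the best approximations to $x,y$ by linear combinations of the $p_i$; then $\|x - x'\|, \|y - y'\|$ are small, $Y = \mathrm{span}\{p_1,\dots,p_n\}$ is a finite dimensional sublattice, and $x',y' \in Y$.

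Having the approximants, I would renormalize: put $x'' = x'/\|x'\|$, $y'' = y'/\|y'\| \in S_Y^+$ (legitimate since $x',y'$ are close to unit vectors, hence nonzero). Then $\|x'' + y''\| \leq \|x+y\| + O(\varepsilon)$ by the triangle inequality, so $\lambda^+(Y) \leq \|x''+y''\| \leq \lambda^+(X) + O(\varepsilon)$, giving \eqref{eq:fin dim subsp}. For \eqref{eq:fin dim subsp beta} there is the extra wrinkle that disjointness $x \wedge y = \mathbf{0}$ must be preserved: here I would choose the partition of unity so that $x$ and $y$ are (approximately) supported on disjoint blocks — since $x \wedge y = \mathbf 0$ exactly, the components $p_i$ can be split into those below the band generated by $x$ and those below the complementary band, so that $x'$ and $y'$ are exactly disjoint elements of $Y$. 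Then $x'' \wedge y'' = \mathbf 0$ as well, $\|x'' \vee y''\| \leq \|x \vee y\| + O(\varepsilon)$, and $\beta(Y) \leq \beta(X) + O(\varepsilon)$. Letting $\varepsilon \to 0$ finishes both identities.

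The main obstacle is the approximation step: justifying that under PPP one may replace arbitrary $x,y \in X^+$ by nearby elements of a finite dimensional sublattice while simultaneously (i) controlling norms and (ii), for $\beta$, keeping disjointness exact. Passing to the principal ideal $I_e$ and invoking its AM-structure (Kakutani representation as $C(K)$) is the clean way to get uniform — hence norm — approximation by "step functions," and the PPP is exactly what guarantees the relevant band projections, so the approximants genuinely lie in a sublattice. One should be mildly careful that the AM-norm on $I_e$ is equivalent to the restriction of the $X$-norm on the set in question (it dominates the $X$-norm and $\|e\| = 1$ bounds it below by a constant on vectors $\leq e$), which is what converts uniform smallness into norm smallness; and for disjointness, one exploits that $x \wedge y = \mathbf 0$ means $x$ and $y$ sit in complementary bands, so the partition of $K$ can be taken to respect the corresponding clopen decomposition.
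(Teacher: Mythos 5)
Your proposal is correct and follows essentially the same route as the paper: the easy inequality is the same, and for the converse the paper likewise approximates $x,y$ in norm by normalized elements of the finite-dimensional sublattice spanned by finitely many disjoint components of $x+y$ (you use $x\vee y$, which generates the same ideal), citing the Freudenthal spectral theorem under PPP rather than passing through the Kakutani $C(K)$ representation of the principal ideal. Your extra care with exact disjointness in the $\beta$ case is a valid filling-in of what the paper dismisses as ``handled similarly.''
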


    \begin{proof}
We only show \eqref{eq:fin dim subsp}, as \eqref{eq:fin dim subsp beta} is handled similarly.

Clearly, $\lambda^+(X)\leq\inf\{\lambda^+(Y)\,\colon\,\mbox{$Y$ is finite dimensional sublattice of $X$}\}.$ To prove the converse, fix $A > \lambda^+(X)$. Let $x,y \in S_X^+$ be such that $\|x+y\| < A$.
 By \cite[Theorem 2.8 and the discussion preceding it]{A-B}, for any $\varepsilon > 0$ we can find disjoint components of $z=x+y$, say $z_1, \ldots, z_N$, so that there exist $x', y' \in S_X^+ \cap Y$, where $Y = \mathrm{span}\,[z_1, \ldots, z_N]$, with $\|x - x'\|, \|y - y'\| < \varepsilon$.
 Then $\lambda^+(Y)\leq\|x'+y'\|<\|z\| + 2\varepsilon < A + 2\varepsilon$. As $A > \lambda^+(X)$ and $\varepsilon > 0$ are arbitrary, the conclusion $\inf\{\lambda^+(Y)\,\colon\,\mbox{$Y$ is finite dimensional sublattice of $X$}\}\leq\lambda^+(X)$ follows.
\end{proof}

\begin{comment}
     Now consider the general Banach lattice $X$. We clearly have $\lambda^+(X) \geq \lambda^+(X^{**})$.
 
 By \cite[Theorem 1.47]{A-B}, any $\sigma$-order complete Banach lattice has the PPP, hence any dual Banach lattice must have this property. Consequently, $\lambda^+(X^{**})$ is the supremum of $\lambda^+(Z)$, taken over finite dimensional sublattices $Z \subset X^{**}$.
 For any such sublattice, and any $\varepsilon > 0$, Bernau's local reflexivity paper [2] provides a lattice homomorphism $T : Z \to Y$, where $Y$ is a sublattice of $X$, and $\|T\| \|T^{-1}\| < 1+\varepsilon$. Thus, $\lambda^+(Y) \leq (1+\varepsilon) \lambda^+(Z)$. Taking the infimum over all $Z$ and $\varepsilon$, we obtain $\lambda^+(X^{**}) \leq \lambda^+(X) \leq \inf_{Y \subset X} \lambda^+(Y) = \lambda^+(X^{**})$, leading us to conclude we have equalities everywhere.
\end{comment}

The following Lemma improves the inequality proved in \cite[Lemma 2.1]{galego-me}.

\begin{proposition}\label{+schaefer under isomorphisms}
    Let $X$ and $Y$ be Banach lattices and $T\colon X\to Y$ be a Banach lattice isomorphism. Then
    \begin{gather*}
       \frac{1}{\|T\|\|T^{-1}\|}\lambda^+(X)\leq \lambda^+(Y)\leq\|T\|\|T^{-1}\|\lambda^+(X) \quad\mbox{and}\quad 
        \frac{1}{\|T\|\|T^{-1}\|}\beta(X)\leq \beta(Y)\leq\|T\|\|T^{-1}\|\beta(X) .
    \end{gather*}
\end{proposition}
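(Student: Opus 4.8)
The plan is to establish only the two right-hand inequalities, $\lambda^+(Y)\le\|T\|\,\|T^{-1}\|\,\lambda^+(X)$ and $\beta(Y)\le\|T\|\,\|T^{-1}\|\,\beta(X)$, and then obtain the left-hand ones for free by applying these to the lattice isomorphism $T^{-1}\colon Y\to X$, noting that $\|T^{-1}\|\,\|(T^{-1})^{-1}\|=\|T\|\,\|T^{-1}\|$. I will use three standard facts about a Banach lattice isomorphism $T$: it preserves the order in both directions ($x\ge{\bf0}\iff x=x\vee{\bf0}\iff Tx=Tx\vee{\bf0}\iff Tx\ge{\bf0}$, and likewise for $T^{-1}$); it preserves disjointness ($Tx\wedge Ty=T(x\wedge y)$); and, for positive $x,y$, $\max\{\|x+y\|,\|x-y\|\}=\|x+y\|$ (since $|x-y|\le x+y$), so that $\lambda^+(X)=\inf\{\|x+y\|:x,y\in S_X^+\}$ and $\beta(X)=\inf\{\|x+y\|:x,y\in S_X^+,\ x\wedge y={\bf0}\}$ (recall $x+y=x\vee y$ when $x\wedge y={\bf0}$).

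Put $C=\|T\|\,\|T^{-1}\|$ and fix $x,y\in S_X^+$. Then $Tx,Ty$ are positive with $\|Tx\|,\|Ty\|\ge\|T^{-1}\|^{-1}$, so $\hat x=Tx/\|Tx\|$ and $\hat y=Ty/\|Ty\|$ belong to $S_Y^+$. The one step requiring care is the estimate of $\|\hat x+\hat y\|$: the triangle inequality only gives the useless bound $2$, so instead I will use positivity together with the monotonicity of the lattice norm. Since $\|Tx\|^{-1},\|Ty\|^{-1}\le\|T^{-1}\|$ and $Tx,Ty\ge{\bf0}$, we have
\[
{\bf0}\le\hat x+\hat y\le\|T^{-1}\|\,(Tx+Ty)=\|T^{-1}\|\,T(x+y),
\]
whence $\|\hat x+\hat y\|\le\|T^{-1}\|\,\|T(x+y)\|\le C\,\|x+y\|$. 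Therefore $\lambda^+(Y)\le\|\hat x+\hat y\|\le C\,\|x+y\|$, and taking the infimum over $x,y\in S_X^+$ gives $\lambda^+(Y)\le C\,\lambda^+(X)$.

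For $\beta$ I repeat the argument under the extra hypothesis $x\wedge y={\bf0}$. Then $Tx\wedge Ty=T(x\wedge y)={\bf0}$, and rescaling by positive scalars keeps disjointness: for $0<s\le t$ we get ${\bf0}\le(sTx)\wedge(tTy)\le(tTx)\wedge(tTy)=t\,(Tx\wedge Ty)={\bf0}$, so $\hat x\wedge\hat y={\bf0}$. Hence $\hat x\vee\hat y=\hat x+\hat y$ and $x\vee y=x+y$, and the displayed estimate yields $\|\hat x\vee\hat y\|=\|\hat x+\hat y\|\le C\,\|x+y\|=C\,\|x\vee y\|$; infimizing over admissible $x,y$ gives $\beta(Y)\le C\,\beta(X)$. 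Applying both bounds to $T^{-1}\colon Y\to X$ then completes the proof. I do not expect a genuine obstacle here; the only substantive point is the replacement of the triangle inequality by the order-plus-monotonicity estimate above, together with the trivial observation, needed in the $\beta$ case, that positive disjoint elements remain disjoint after normalization.
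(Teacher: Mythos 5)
Your proposal is correct and follows essentially the same route as the paper: the key estimate in both is to bound $\frac{Tx}{\|Tx\|}+\frac{Ty}{\|Ty\|}$ using positivity and the monotonicity of the lattice norm (via $\|Tx\|^{-1},\|Ty\|^{-1}\le\|T^{-1}\|$) rather than the triangle inequality, and then take infima. You merely make explicit two points the paper leaves implicit — deducing the left-hand inequalities by applying the estimate to $T^{-1}$, and checking that disjointness survives normalization in the $\beta$ case — so no substantive difference.
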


\begin{proof}
Again, we only deal with the inequality for $\lambda^+$, as the case of $\beta$ requires only minor adjustments.

    Let $x,y\in S_X^+$ be given. Notice that
    \begin{gather*}
        \frac{\|T\|}{\|Tx\|}x+\frac{\|T\|}{\|Tx\|}y\leq\max\left\{\frac{\|T\|}{\|Tx\|},\frac{\|T\|}{\|Ty\|}\right\}(x+y)\leq\|T\|\|T^{-1}\|(x+y).
    \end{gather*}
    Thus,
    \begin{gather*}
        \lambda^+(Y)\leq\left\|\frac{Tx}{\|Tx\|}+\frac{Ty}{\|Ty\|}\right\|\leq\left\|\frac{\|T\|}{\|Tx\|}x+\frac{\|T\|}{\|Tx\|}y\right\|\leq\|T\|\|T^{-1}\|\|x+y\|.
    \end{gather*}
    So, $\lambda^+(Y)\leq\|T\|\|T^{-1}\|\lambda^+(X).$
\end{proof}

To go on, we say that $Y$ contains a lattice copy of $X$ if $X$ is Banach-lattice isomorphic to a sublattice of $Y$;
 we also say that $Y$ contains lattice-almost isometric copies of $X$ if for each $\varepsilon>0$, there exist
 a sublattice $Z_\varepsilon$ of $Y$ and a Banach-lattice isomorphism $T_\varepsilon\colon X\to Z_\varepsilon$ such that $\|T_\varepsilon\|\|T_\varepsilon^{-1}\|\leq1+\varepsilon.$ The next corollary is an inmediate consequence of Proposition \ref{+schaefer under isomorphisms}.

\begin{corollary}\label{+schaefer and subspaces}
Let $X$ and $Y$ be Banach lattices.  If $Y$ contains lattice-almost isometric copies of $X$, then $\lambda^+(Y)\leq \lambda^+(X)$ and $\beta(Y)\leq \beta(X)$.
\end{corollary}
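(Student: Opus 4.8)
The plan is to combine the two-sided distortion bounds of Proposition \ref{+schaefer under isomorphisms} with the elementary observation that both $\lambda^+$ and $\beta$ can only increase when one restricts the relevant infimum to a sublattice.

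First I would fix $\varepsilon > 0$ and invoke the hypothesis to produce a sublattice $Z_\varepsilon \subseteq Y$ together with a Banach-lattice isomorphism $T_\varepsilon \colon X \to Z_\varepsilon$ satisfying $\|T_\varepsilon\|\,\|T_\varepsilon^{-1}\| \leq 1+\varepsilon$. Since $Z_\varepsilon$ carries the norm and the order inherited from $Y$, we have $S_{Z_\varepsilon}^+ \subseteq S_Y^+$, and a pair $x,y \in Z_\varepsilon$ is disjoint in $Z_\varepsilon$ exactly when it is disjoint in $Y$. Hence the infimum defining $\lambda^+(Y)$ runs over a set containing the one defining $\lambda^+(Z_\varepsilon)$, so $\lambda^+(Y) \leq \lambda^+(Z_\varepsilon)$, and likewise $\beta(Y) \leq \beta(Z_\varepsilon)$.

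Next I would apply Proposition \ref{+schaefer under isomorphisms} to the isomorphism $T_\varepsilon \colon X \to Z_\varepsilon$ (with $Z_\varepsilon$ playing the role of ``$Y$'' in that statement), obtaining
\[
\lambda^+(Z_\varepsilon) \leq \|T_\varepsilon\|\,\|T_\varepsilon^{-1}\|\,\lambda^+(X) \leq (1+\varepsilon)\lambda^+(X),
\]
and similarly $\beta(Z_\varepsilon) \leq (1+\varepsilon)\beta(X)$. Chaining these with the previous step gives $\lambda^+(Y) \leq (1+\varepsilon)\lambda^+(X)$ and $\beta(Y) \leq (1+\varepsilon)\beta(X)$ for every $\varepsilon > 0$; letting $\varepsilon \to 0^+$ yields the conclusion.

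There is essentially no serious obstacle: the statement really is immediate from Proposition \ref{+schaefer under isomorphisms}. The only point deserving (routine) care is the compatibility of the norm and of the disjointness relation when passing to a sublattice, which is precisely what justifies the monotonicity step $\lambda^+(Y) \leq \lambda^+(Z_\varepsilon)$ and $\beta(Y) \leq \beta(Z_\varepsilon)$.
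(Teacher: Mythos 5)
Your proof is correct and follows exactly the route the paper intends: the paper states the corollary as an immediate consequence of Proposition \ref{+schaefer under isomorphisms}, and your argument (sublattice monotonicity of $\lambda^+$ and $\beta$, then the distortion bound applied to $T_\varepsilon\colon X\to Z_\varepsilon$, then $\varepsilon\to0^+$) is precisely the intended chain of inequalities.
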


If $X$ and $Y$ are Banach lattices, recall that $X$ is said to be \textit{lattice finitely representable} in $Y$ is for every finite
dimensional sublattice $E$ of $X$ and every $\varepsilon>0$, there is a sublattice $F$ of $Y$ and a Banach lattice isomorphism $T\colon E\to F$
such that $\|T\|\|T^{-1}\|\leq1+\varepsilon$. The following fact, established in \cite{bernau}, will be used in the proof of the upcoming theorem.

\begin{fact}\label{finite representability of X** in X}
    For every Banach lattice $X$, the bidual $X^{**}$ is lattice finitely representable in $X$.
\end{fact}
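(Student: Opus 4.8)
The plan is to reduce the statement to a local approximation property for disjoint positive elements of $X^{**}$. Recall first that a finite-dimensional Banach lattice $E$ of dimension $n$ is atomic: it has a basis $a_1,\dots,a_n$ consisting of pairwise disjoint positive atoms, and $E$ coincides with the sublattice generated by $\{a_1,\dots,a_n\}$. On such a disjoint basis every lattice operation is carried out coordinatewise; in particular, if $b_1,\dots,b_n\in X^{+}$ are pairwise disjoint, the linear map $T\colon E\to X$ with $Ta_i=b_i$ is automatically a lattice homomorphism onto the sublattice $\mathrm{span}\{b_1,\dots,b_n\}$ of $X$. Hence it suffices to prove the following: given a finite-dimensional sublattice $E\subset X^{**}$ with disjoint positive basis $a_1,\dots,a_n$, and given $\varepsilon>0$, there exist pairwise disjoint $b_1,\dots,b_n\in X^{+}$ with
\[
(1+\varepsilon)^{-1}\Bigl\|\sum_{i=1}^{n} c_i a_i\Bigr\|\;\le\;\Bigl\|\sum_{i=1}^{n} c_i b_i\Bigr\|\;\le\;(1+\varepsilon)\Bigl\|\sum_{i=1}^{n} c_i a_i\Bigr\|
\]
for all scalars $c_1,\dots,c_n$. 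Indeed, $a_i\mapsto b_i$ is then a Banach lattice isomorphism of $E$ onto a sublattice of $X$ with $\|T\|\le1+\varepsilon$ and $\|T^{-1}\|\le1+\varepsilon$, and since $E$ and $\varepsilon$ are arbitrary this is exactly lattice finite representability of $X^{**}$ in $X$.

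The displayed claim is a principle of local reflexivity adapted to the order structure, and is the substantive content; it is due to Bernau \cite{bernau}. One cannot simply invoke the classical principle of local reflexivity, since the near-isometries it supplies need not respect $\vee$ and $\wedge$. One argues instead directly: by Goldstine's theorem each $a_i$ is a weak-$*$ limit of elements of $B_X$, and these approximants must then be ``disjointified''. This is delicate, because weak-$*$ limits destroy disjointness; Bernau's argument exploits the $\sigma$-order completeness of the dual lattice $X^{**}$ to resolve the approximants into genuinely disjoint elements of $X$ whose span carries almost the same norm as $\mathrm{span}\{a_1,\dots,a_n\}$ --- the same circle of ideas (components and band projections) that underlies the proof of Proposition \ref{+schaefer is determinated by finite dimensional subspaces}. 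An alternative packaging, which I would be equally happy to use, is to first quote from \cite{bernau} the lattice-isometric embedding $X^{**}\hookrightarrow X_{\mathcal U}$ into a suitable ultrapower, and then verify separately that every ultrapower $X_{\mathcal U}$ is lattice finitely representable in $X$: a finite-dimensional sublattice of $X_{\mathcal U}$ is generated by finitely many representatives $e_i=(x^{(k)}_i)_{\mathcal U}$, every lattice polynomial commutes with the ultraproduct, and norms pass to $\mathcal U$-limits, so for $\mathcal U$-almost every $k$ the assignment $e_i\mapsto x^{(k)}_i$ is a $(1+\varepsilon)$-lattice-isomorphism onto a sublattice of $X$ (the clause ``almost every'' being made precise by a compactness argument on the finite-dimensional unit sphere of the sublattice). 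Since lattice finite representability is transitive, this also yields the conclusion.

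In either packaging the genuine obstacle is the same: transferring the disjointness (order) structure of the bidual into $X$, rather than merely its linear-metric structure --- precisely the content of \cite{bernau}. I would therefore organize the write-up around the elementary reduction of the first paragraph together with a citation of Bernau's disjointification lemma (or of the embedding $X^{**}\hookrightarrow X_{\mathcal U}$) for the single nontrivial input; the verification that ultrapowers are lattice finitely representable is then a routine ultraproduct argument.
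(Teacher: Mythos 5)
Your proposal is correct and takes essentially the same route as the paper: the paper records this statement as a Fact ``established in \cite{bernau}'' with no proof of its own, and your write-up likewise defers the one substantive ingredient --- producing pairwise disjoint positive elements of $X$ whose span almost isometrically reproduces the norm on the span of given disjoint positive elements of $X^{**}$ --- to Bernau's lattice principle of local reflexivity. The preliminary reduction you supply (every finite-dimensional Banach lattice has a basis of pairwise disjoint positive atoms, and any linear map carrying such a basis to disjoint positive elements is automatically a lattice homomorphism) is correct and consistent with how the paper handles finite-dimensional sublattices elsewhere.
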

 
\begin{theorem}\label{bidual etc}
 Let $\mathcal U$ be a non-trivial ultrafilter on $\mathbb N$ and $X$ be a Banach lattice. Then
\begin{gather*}
\lambda^+(X)=\lambda^+(X^{**})=\lambda^+(X_\mathcal U) \quad \mbox{and} \quad \beta(X)=\beta(X^{**})=\beta(X_\mathcal U).
\end{gather*}
\end{theorem}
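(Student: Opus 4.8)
The plan is to prove the chain of equalities $\lambda^+(X) = \lambda^+(X^{**}) = \lambda^+(X_{\mathcal U})$ (and similarly for $\beta$) by exploiting three ingredients already available: (a) the reduction of $\lambda^+$ and $\beta$ to finite-dimensional sublattices, (b) the stability of these parameters under lattice-almost isometric embeddings (Corollary \ref{+schaefer and subspaces}), and (c) the mutual lattice finite representability of $X$, $X^{**}$, and $X_{\mathcal U}$ in one another. The key point is that whenever $Y$ is lattice finitely representable in $Z$, one has $\lambda^+(Z) \leq \lambda^+(Y)$: indeed, for any finite-dimensional sublattice $E \subseteq Y$ and any $\varepsilon > 0$ there is a sublattice of $Z$ that is a $(1+\varepsilon)$-lattice-isomorphic copy of $E$, so by Proposition \ref{+schaefer under isomorphisms} $\lambda^+(Z) \leq (1+\varepsilon)\lambda^+(E)$; letting $\varepsilon \to 0$ gives $\lambda^+(Z) \leq \lambda^+(E)$, and taking the infimum over all finite-dimensional sublattices $E \subseteq Y$ gives $\lambda^+(Z) \leq \lambda^+(Y)$ (using the ``$\leq$'' half of Proposition \ref{+schaefer is determinated by finite dimensional subspaces}, which in fact holds for every Banach lattice since one always has $\lambda^+(Y) \leq \lambda^+(E)$... wait — one needs $\inf_E \lambda^+(E) \leq \lambda^+(Y)$, which holds trivially when $Y$ itself is finite-dimensional and in general follows from the PPP statement; but here the cleanest route is to note that $X^{**}$ and $X_{\mathcal U}$ are $\sigma$-order complete, hence have the PPP, so Proposition \ref{+schaefer is determinated by finite dimensional subspaces} applies to them, while for $X$ we only ever need the elementary inequality $\lambda^+(X) \leq \lambda^+(E)$ for $E$ a sublattice).

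Concretely, I would organize the argument as follows. First, record the monotonicity principle of the previous paragraph as a small lemma: if $Y$ is lattice finitely representable in $Z$, then $\lambda^+(Z) \leq \lambda^+(Y)$ and $\beta(Z) \leq \beta(Y)$. Second, observe that $X$ is (trivially) lattice finitely representable in $X^{**}$ (Fact \ref{embedding into the ultrapower}'s analogue — $X$ embeds isometrically as a sublattice of $X^{**}$) and in $X_{\mathcal U}$ (Fact \ref{embedding into the ultrapower}), so the lemma yields $\lambda^+(X^{**}) \leq \lambda^+(X)$ and $\lambda^+(X_{\mathcal U}) \leq \lambda^+(X)$. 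Third, for the reverse inequalities: $X^{**}$ is lattice finitely representable in $X$ by Fact \ref{finite representability of X** in X}, so the lemma gives $\lambda^+(X) \leq \lambda^+(X^{**})$; and $X_{\mathcal U}$ is lattice finitely representable in $X$ (a standard fact about ultrapowers — any finite-dimensional sublattice of $X_{\mathcal U}$ is, up to $1+\varepsilon$, a sublattice of $X$, via the coordinate representatives), so $\lambda^+(X) \leq \lambda^+(X_{\mathcal U})$. Combining gives all three equalities. The $\beta$ case is word-for-word identical, using the $\beta$ halves of Propositions \ref{+schaefer is determinated by finite dimensional subspaces} and \ref{+schaefer under isomorphisms}.

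The main obstacle — really the only subtle point — is the claim that $X_{\mathcal U}$ is lattice finitely representable in $X$. One must check that given finitely many elements $(x^{(1)}_n)_{\mathcal U}, \ldots, (x^{(k)}_n)_{\mathcal U}$ of $X_{\mathcal U}$ spanning a sublattice $E$, the ``slice'' sublattices $E_n = \mathrm{span}[x^{(1)}_n, \ldots, x^{(k)}_n] \subseteq X$ (which are genuine sublattices of $X$ because lattice operations in $X_{\mathcal U}$ are computed coordinatewise) approximate $E$ in the Banach–Mazur–lattice sense along $\mathcal U$: the norms of all lattice-polynomial expressions in the $x^{(j)}_n$ converge along $\mathcal U$ to the corresponding norms in $E$. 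Since $E$ is finite-dimensional one can reduce to a compact set of coefficient vectors and a finite generating set of lattice words, making the convergence uniform; this is the routine but slightly technical verification. Once this is granted — and it is entirely standard, see the ultrapower references — the proof is a short assembly of the pieces above. I would also remark that, alternatively, the whole theorem for $\beta$ and $\lambda^+$ can be deduced from the single meta-principle ``these parameters are invariant under mutual lattice finite representability,'' which is perhaps the conceptually cleanest formulation.
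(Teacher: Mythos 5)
Your treatment of the bidual equality is essentially the paper's own argument: the canonical lattice-isometric embedding of $X$ into $X^{**}$ gives one direction, and the other follows from the PPP of $X^{**}$ (so that Proposition \ref{+schaefer is determinated by finite dimensional subspaces} applies), Bernau's local reflexivity (Fact \ref{finite representability of X** in X}), and Proposition \ref{+schaefer under isomorphisms}. That part is fine.

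The ultrapower equality is where there is a genuine gap. Your route requires reducing $\lambda^+(X_{\mathcal U})$ to finite-dimensional sublattices of $X_{\mathcal U}$, and you justify this by asserting that $X_{\mathcal U}$ is $\sigma$-order complete. That assertion is unjustified and is false in general: an ultrapower of a $C(K)$-space is again a $C(K')$-space, and such $K'$ is typically an F-space but not basically disconnected, so $\sigma$-order completeness fails; nothing available at this point of the paper gives the PPP for $X_{\mathcal U}$. (The Remark following the theorem does extend Proposition \ref{+schaefer is determinated by finite dimensional subspaces} to arbitrary lattices, but only as a consequence of the theorem you are proving, so invoking it here would be circular.) A second, smaller inaccuracy: the coordinate slices $\mathrm{span}[x_n^{(1)},\dots,x_n^{(k)}]$ are \emph{not} sublattices of $X$ --- lattice identities and disjointness pass to the ultrapower only approximately, so one must first disjointify the (approximate) atoms of $E$ coordinatewise before the spans become genuine sublattices. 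Both defects are repairable (for instance, route the argument through $(X_{\mathcal U})^{**}$ and transitivity of lattice finite representability), but as written the step $\lambda^+(X)\le\lambda^+(X_{\mathcal U})$ is not established. The paper avoids all of this with a direct argument: it takes representatives $(x_n),(y_n)$ of a near-optimal pair in $X_{\mathcal U}$, selects a single index $m$ in a suitable member of $\mathcal U$ on which $\|\,|x_m|+|y_m|\,\|$ is small while $\|x_m\|,\|y_m\|$ are close to $1$, and normalizes --- a short computation requiring no completeness hypotheses and no finite-dimensional reduction for $X_{\mathcal U}$.
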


\begin{proof}
Here as well, we only prove the statements involving $\lambda^+$. Begin from the first equality. Since $J_X\colon X\to X^{**}$ is an into Banach lattice isometry, we have $\lambda^+(X^{**})\leq \lambda^+(X)$. On the other hand, 
 by \cite[Theorem 1.47]{A-B}, any $\sigma$-order complete Banach lattice has the PPP, hence any dual Banach lattice must have this property. Consequently, by Proposition \ref{+schaefer is determinated by finite dimensional subspaces} we have
 \begin{gather*}
     \lambda^+(X^{**})=\inf\{\lambda^+(Y)\,\colon\,\mbox{$Y$ is finite dimensional sublattice of $X^{**}$}\}.
 \end{gather*}
Let $\varepsilon>0$ be given and $Y$ be a finite dimensional sublattice of $X^{**}$ such that $\lambda^+(Y)<\lambda^+(X^{**})+\varepsilon$. By Fact \ref{finite representability of X** in X} there is
    a sublattice $Y_0$ of $X$ and a Banach lattice isomorphism $T\colon Y\to Y_0$ such that $\|T\|\|T^{-1}\|\leq 1+\varepsilon$. From Proposition
    \ref{+schaefer under isomorphisms} it follows that 
    \begin{equation*}
    \lambda^+(X)\leq \lambda^+(Y_0)\leq(1+\varepsilon)\lambda^+(Y)<(1+\varepsilon)(\lambda^+(X^{**})+\varepsilon).
    \end{equation*}
    By the arbitrariness of $\varepsilon>0$ we conclude that $\lambda^+(X)\leq \lambda^+(X^{**}).$

    Now we prove that $\lambda^+(X)=\lambda^+(X_\mathcal U).$ From Fact \ref{embedding into the ultrapower} and Corollary \ref{+schaefer and subspaces} we have
    $\lambda^+(X_{\mathcal U})\leq\lambda^+(X)$. Now let $\varepsilon>0$ be given. Thus there are two elements $(x_n)_\mathcal U,(y_n)_\mathcal U\in X_\mathcal U$
    with $\|(x_n)_\mathcal U\|=\|(y_n)_{\mathcal U}\|=1$ such that $\|(|x_n|)_\mathcal U+(|y_n|)_\mathcal U\|=\mathcal U-\lim\|\,|x_n|+|y_n|\,\|<\lambda^+(X_{\mathcal U})+\varepsilon$.
    Let $A\in\mathcal U$ be such that $\|\,|x_n|+|y_n|\,\|<\lambda^+(X_{\mathcal U})+\varepsilon$ for all $n\in\mathbb N$. Also let $B\in\mathcal U$ be such that
    $1-\varepsilon<\min\{\|x_n\|,\|y_n\|\}$ for each $n\in B$. If $m\in A\cap B$, then
    \begin{gather*}
       \lambda^+(X)\leq \left\|\frac{|x_m|}{\|x_m\|}+\frac{|y_m|}{\|y_m\|}\right\|\leq\left\|\frac{|x_m|}{1-\varepsilon}+\frac{|y_m|}{1-\varepsilon}\right\|<\frac{\lambda^+(X_{\mathcal U})+\varepsilon}{1-\varepsilon}.
    \end{gather*}
    Since $\varepsilon>0$ was arbitrary, we obtain $\lambda^+(X)\leq \lambda^+(X_\mathcal U).$
\end{proof}

\begin{remark}
    The proof of Theorem \ref{bidual etc} shows that \eqref{eq:fin dim subsp} and \eqref{eq:fin dim subsp beta} hold for any Banach lattice $X$ (no need to assume the PPP).
\end{remark}

\section{\texorpdfstring{$\alpha$, $\beta$, and $\lambda^+$}{alpha, beta, and lambda+} describe ``global'' properties of Banach lattices}\label{global}

Even though the parameters $\alpha$, $\beta$, and $\lambda^+$ are calculated ``locally'', they can be used to describe ``global'' properties of Banach lattices.
% The following result characterizes the Banach lattices $X$ with $\lambda^+(X)=2$. 
To present our first result in this direction, recall that a Banach lattice $X$ is an abstract $L_p$-space, $1\leq p<\infty$, if  $\|x+y\|=\|x\|+\|y\|$ for each $x,y\in X$ disjoint
(see \cite[Definition 1, p. 131]{lacey}). If $p=1$, $X$ is said to be an abstract $L$-space.

\begin{theorem}\label{characterization of banach lattices having l=2}
Let $X$ be a Banach lattice. The following statements are equivalent:
\begin{enumerate}
    \item[(i)] $\lambda^+(X)=2$.
    \item[(ii)] $\beta(X)=2$.
    \item[(iii)] $X$ is an abstract $L$-space. 
\end{enumerate}
\end{theorem}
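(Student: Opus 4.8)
The plan is to run the cycle $(i)\Rightarrow(ii)\Rightarrow(iii)\Rightarrow(i)$, where the first and last implications are short and the middle one carries the real content. Throughout I keep the standing assumption $\dim X\ge 2$ (the case $X=\mathbb R$ being trivial), which guarantees that $S_X^+$ contains disjoint nonzero vectors: since a Banach lattice is Archimedean, if $X^+$ had no incomparable pair $a,b$ then $(a-b)^+,(a-b)^-$ would furnish disjoint elements unless $X^+$ is totally ordered, forcing $\dim X\le 1$.

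For $(i)\Rightarrow(ii)$ there is nothing to prove: since $\lambda^+(X)\le\beta(X)\le 2$ for every Banach lattice, $\lambda^+(X)=2$ forces $\beta(X)=2$ (and the same two inequalities make $(iii)\Rightarrow(ii)$ trivial once $(iii)\Rightarrow(i)$ is known, since disjoint $x,y\in S_X^+$ then satisfy $\|x\vee y\|=\|x+y\|=2$). For $(iii)\Rightarrow(i)$ I would invoke the classical fact that in an abstract $L$-space the norm is additive on the whole positive cone; equivalently, by Kakutani's representation theorem $X$ is lattice isometric to $L_1(\mu)$, whence $\lambda^+(X)=\lambda^+\big(L_1(\mu)\big)=2$ by Example \ref{example L_p}. (Alternatively, once additivity on $X^+$ is granted, $\|x+y\|=\|x\|+\|y\|=2$ for all $x,y\in S_X^+$, and $\lambda^+(X)=2$ follows from the formula $\lambda^+(X)=\inf\{\|x+y\|\,:\,x,y\in S_X^+\}$ of Proposition \ref{formulas for lambda and alpha}.)

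The main step is $(ii)\Rightarrow(iii)$. Assume $\beta(X)=2$. Since $|x+y|=|x|+|y|$ whenever $x,y$ are disjoint and $\|\,|z|\,\|=\|z\|$, it suffices to prove $\|x+y\|=\|x\|+\|y\|$ for every pair of disjoint nonzero $x,y\in X^+$. After relabelling put $a=\|x\|\le b=\|y\|$. Then $x/a,\,y/b\in S_X^+$ are disjoint, so by the definition of $\beta$ together with the triangle inequality $\|x/a+y/b\|=2$; multiplying by $a$ gives $\|x+(a/b)y\|=2a$. Now consider the convex function $\varphi(t)=\|x+ty\|$ on $[0,\infty)$, for which $\varphi(0)=a$ and $\varphi(a/b)=2a$. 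If $a=b$ we are already done; if $a<b$, then $0<a/b<1$, and the monotonicity of chord slopes of a convex function yields
$$
b=\frac{\varphi(a/b)-\varphi(0)}{a/b}\le\frac{\varphi(1)-\varphi(a/b)}{1-a/b},
$$
hence $\varphi(1)\ge 2a+b(1-a/b)=a+b$. Since $\varphi(1)=\|x+y\|\le a+b$ trivially, we obtain $\|x+y\|=a+b=\|x\|+\|y\|$, so $X$ is an abstract $L$-space.

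I expect the delicate point in $(ii)\Rightarrow(iii)$ to be the bookkeeping that places the one value of $\beta$ we control at an \emph{interior} node $a/b\in(0,1)$ of the chord of $\varphi$ over $[0,1]$ — this is exactly why one must compare $x/a$ against $y/b$ rather than against $y/a$. Everything else is a one-line estimate; the only genuinely external ingredient is the classical additivity of the norm on the positive cone of an abstract $L$-space, invoked in $(iii)\Rightarrow(i)$, which I would cite (via Kakutani's theorem, or directly from the structure theory of $AL$-spaces) rather than reprove.
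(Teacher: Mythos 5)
Your proposal is correct and follows essentially the same route as the paper: the cycle $(i)\Rightarrow(ii)\Rightarrow(iii)\Rightarrow(i)$ with the same key step of normalizing a disjoint pair to $x/\|x\|,\,y/\|y\|\in S_X^+$ and then propagating the resulting additivity back to $x,y$. The only cosmetic difference is that where the paper proves a small homogeneity claim via the reverse triangle inequality, you use the chord-slope monotonicity of the convex function $t\mapsto\|x+ty\|$ --- an equivalent elementary argument.
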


\begin{proof}
Since $\lambda^+(X)\leq\beta(X)\leq2$, it follows that (i) implies (ii). Suppose that (ii) holds. 

\begin{claim}\label{ejercicio clásico}
If $x,y\in X$ and $\|x+y\|=\|x\|+\|y\|$, then $\|\alpha x+\beta y\|=\alpha\|x\|+\beta\|y\|$ for $\alpha,\beta\geq0$.
\end{claim}

We may assume that $\beta\geq\alpha$. Then
\begin{gather*}
    \|\alpha x+\beta y\|=\|(\alpha-\beta)x+\beta(x+y)\|\geq\beta(\|x\|+\|y\|)+(\alpha-\beta)\|x\|=\alpha\|x\|+\beta\|y\|,
\end{gather*}
and the claim is proved.

Let $x,y\in X^+\setminus\{{\bf0}\}$ be such that $x\wedge y={\bf0}$. The definition of $\beta(X)$ gives that $\displaystyle\left\|\frac{x}{\|x\|}+\frac{y}{\|y\|}\right\|=2$. By Claim \ref{ejercicio clásico} we have $\left\|a\dfrac{x}{\|x\|}+b\dfrac{y}{\|y\|}\right\|=a+b$ for all $a,b\geq0$. In particular, if $a=\|x\|$ and $b=\|y\|$, we obtain $\|x+y\|=\|x\|+\|y\|$ as required. Therefore, we prove (iii).

Finally, if (iii) is valid, then by \cite[Theorem 1, p. 133]{lacey} we have $\|x+y\|=\|x\|+\|y\|$ for all $x,y\in X^+$. Thus, $\lambda^+(X)=2$.
\end{proof}

%\begin{remark}\end{remark}
    Example \ref{example L_p} implies that $\lambda^+(X)=\beta(X)=2^{1/p}$ when $X$ is an abstract $L_p$-space. The converse is false, per Example \ref{lambda^+(X)=sqrt{2} and X is not hilbert}. So, Theorem \ref{characterization of banach lattices having l=2} cannot be modified to characterize abstract $L_p$-spaces. 
    However, a weaker result (showing that the parameters $\lambda^+$ and $\beta$ can describe global properties of Banach lattices) still holds.
To state the upcoming result, we recall (see e.g. \cite[Section 1.f]{lindenstrauss-tzafriri}) that a Banach lattice $X$ satisfies a lower (resp.~upper) $q$-estimate with constant $M$ ($1\leq q\leq \infty$) if the inequality
\begin{gather*}
    M\left\|\sum_{i=1}^nx_i\right\|\geq \left(\sum_{i=1}^n\|x_i\|^q\right)^{1/q} \quad (\mbox{respectively,}\, \left\|\sum_{i=1}^nx_i\right\|\leq M\left(\sum_{i=1}^n\|x_i\|^q\right)^{1/q}),
\end{gather*}
holds for every family of pairwise disjoint elements $x_1,\ldots,x_n\in X$. For a Banach lattice $X$, we denote by $s(X)$ the infimum of all $q \in [1,\infty]$ for which $X$ has lower $q$-estimate (with some constant); $S(X)$ stands for the supremum of all $q$ for which an upper $q$-estimate holds.
    
   % Recall that a Banach lattice $W$ has \textit{$q$-concave} if there exists a constant $C_q$ so that, for any $w_1, \ldots, w_n \in W$, we have $\big( \sum_i \|w_i\|^q \big)^{1/q} \leq C_q \big\| \big( \sum_i |w_i|^q \big)^{1/q} \big\|$. Denote by $s(W)$ the infimum of all such $q$'s.

\begin{proposition}\label{local vs global}
    If $X$ is an infinite dimensional Banach lattice, then
    \begin{equation}
    2^{1/s(X)} = \lim_n \inf_{Y \subset X, \dim Y = n} \sup_{Z \subset Y} \lambda^+(Z) = \lim_n \inf_{Y \subset X, \dim Y = n} \sup_{Z \subset Y} \beta(Z) .
    \label{eq:inf sup}
    \end{equation}
    Here, the supremum is taken over all sublattices $Z \subset Y$ with $\dim Z \geq 2$, and the infimum -- over all $n$-dimensional sublattices $Y \subset X$. Similarly,
    \begin{equation}
    2^{1/S(X)} = \lim_n \sup_{Y \subset X, \dim Y = n} \inf_{Z \subset Y} \alpha(Z) .
    \label{eq:convex sup inf}
    \end{equation}
\end{proposition}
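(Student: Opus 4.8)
The plan is to reduce everything to the behaviour of the Riesz angle $\alpha$ and of $\beta$ on finite–dimensional sublattices, and then to play two classical facts against each other: a ``dyadic sub/supermultiplicativity'' of $\alpha$ and $\beta$, and a Krivine–type theorem for Banach lattices. First I would record, for every sublattice $Y\subseteq X$,
\[
\sup_{Z\subseteq Y,\ \dim Z\geq 2}\lambda^+(Z)=\sup_{Z\subseteq Y,\ \dim Z\geq 2}\beta(Z)=\alpha(Y),\qquad \inf_{Z\subseteq Y,\ \dim Z\geq 2}\alpha(Z)=\beta(Y).
\]
Indeed, a two–dimensional sublattice $Z=\mathrm{span}\,[u,v]$ spanned by disjoint $u,v\in S_Y^+$ satisfies $\lambda^+(Z)=\beta(Z)=\alpha(Z)=\|u+v\|$ — the first equality is Proposition \ref{2-dim}, and $\alpha(Z)=\|u+v\|$ is immediate once $Z$ is identified with $\mathbb R^2$ normalized so that $\|e_1\|=\|e_2\|=1$ — and these pairs $u,v$ run through exactly the data computing $\alpha(Y)$ and $\beta(Y)$ by Proposition \ref{formulas for alpha}; for higher–dimensional $Z$ one only needs the monotonicity of $\alpha$, resp.\ $\beta$, under passing to sublattices. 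After this reduction, \eqref{eq:inf sup} is the assertion $2^{1/s(X)}=\lim_n\inf_{\dim Y=n}\alpha(Y)$ and \eqref{eq:convex sup inf} is $2^{1/S(X)}=\lim_n\sup_{\dim Y=n}\beta(Y)$. Both limits exist: deleting one atom of an $(n{+}1)$–dimensional sublattice shows $n\mapsto\inf_{\dim Y=n}\alpha(Y)$ is non-decreasing and $n\mapsto\sup_{\dim Y=n}\beta(Y)$ is non-increasing, and $n$–dimensional sublattices exist for all $n$ because $\dim X=\infty$.

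Next I would use the elementary fact that for disjoint $f,g$ in any Banach lattice $Y$,
\[
\beta(Y)\min\{\|f\|,\|g\|\}\ \leq\ \|f+g\|\ \leq\ \alpha(Y)\max\{\|f\|,\|g\|\},
\]
obtained by replacing the vector of smaller norm by a suitable positive multiple and invoking monotonicity of the norm. Iterating this over a dyadic partition yields, for pairwise disjoint $e_1,\dots,e_n\in S_Y^+$,
\[
\beta(Y)^{\lfloor\log_2 n\rfloor}\ \leq\ \|e_1+\cdots+e_n\|\ \leq\ \alpha(Y)^{\lceil\log_2 n\rceil}.
\]
Applying the right inequality to the $n$ normalized atoms of an $n$–dimensional sublattice $Y\subseteq X$, together with a lower $q$–estimate of $X$ (valid with some constant $M$ for every $q>s(X)$), gives $\alpha(Y)^{\lceil\log_2 n\rceil}\geq n^{1/q}/M$, hence $\inf_{\dim Y=n}\alpha(Y)\geq (n^{1/q}/M)^{1/\lceil\log_2 n\rceil}\to 2^{1/q}$; letting $q\downarrow s(X)$ gives $\lim_n\inf_{\dim Y=n}\alpha(Y)\geq 2^{1/s(X)}$. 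Dually, the left inequality together with an upper $q$–estimate ($q<S(X)$, constant $M$) gives $\beta(Y)^{\lfloor\log_2 n\rfloor}\leq M n^{1/q}$, so $\sup_{\dim Y=n}\beta(Y)\leq (M n^{1/q})^{1/\lfloor\log_2 n\rfloor}\to 2^{1/q}$, and $q\uparrow S(X)$ gives $\lim_n\sup_{\dim Y=n}\beta(Y)\leq 2^{1/S(X)}$.

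For the reverse inequalities I would invoke the Krivine/Maurey--Pisier-type theorem for Banach lattices (see e.g.\ \cite{lindenstrauss-tzafriri}): an infinite–dimensional Banach lattice contains, for every $n$ and every $\varepsilon>0$, a sublattice $(1+\varepsilon)$-lattice-isomorphic to $\ell^n_{s(X)}$ and one $(1+\varepsilon)$-lattice-isomorphic to $\ell^n_{S(X)}$ (reading $\ell^n_\infty$ when the index is $\infty$). Since $\alpha(\ell^n_p)=\beta(\ell^n_p)=2^{1/p}$, Proposition \ref{+schaefer under isomorphisms} applied to $\beta$ — and its evident analogue for $\alpha$, or simply the identity $\alpha(Y)=\sup_{Z\subseteq Y}\lambda^+(Z)$ — yields, for such sublattices $Y$, $\alpha(Y)\leq(1+\varepsilon)2^{1/s(X)}$ and $\beta(Y)\geq(1+\varepsilon)^{-1}2^{1/S(X)}$; thus $\inf_{\dim Y=n}\alpha(Y)\leq(1+\varepsilon)2^{1/s(X)}$ and $\sup_{\dim Y=n}\beta(Y)\geq(1+\varepsilon)^{-1}2^{1/S(X)}$ for all $n$, and $\varepsilon\to0$ closes both identities. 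The main obstacle is exactly this last step: identifying $2^{1/s(X)}$ and $2^{1/S(X)}$ \emph{on the nose} forces one to exhibit finite-dimensional sublattices almost \emph{isometric} (not merely isomorphic) to $\ell^n_{s(X)}$ and $\ell^n_{S(X)}$, and a disjoint family that is merely near-optimal for the $n$-th lower (resp.\ upper) disjoint-sum constant of $X$ need not span such a sublattice — its span can have strictly smaller $\beta$ or strictly larger $\alpha$ (this already occurs for $X=\ell_2\oplus_\infty\ell_4$). So the argument genuinely relies on having the lattice Krivine theorem available with constants tending to $1$; everything else is the bookkeeping sketched above.
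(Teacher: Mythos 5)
Your argument is correct, and in two places it takes a genuinely different --- and more elementary --- route than the paper. The opening reduction $\sup_{Z\subseteq Y}\lambda^+(Z)=\sup_{Z\subseteq Y}\beta(Z)=\alpha(Y)$ and $\inf_{Z\subseteq Y}\alpha(Z)=\beta(Y)$ for finite-dimensional $Y$ (via Proposition \ref{2-dim} and the disjointification formula for $\alpha$, both elementary in finite dimensions) does not appear in the paper; it makes the two equalities in \eqref{eq:inf sup} literally the same statement, exhibits \eqref{eq:convex sup inf} as its exact dual, and lets you prove monotonicity of the sequences by deleting atoms of an $(n+1)$-dimensional sublattice, whereas the paper must first pass to $X^{**}$ via Bernau's local reflexivity in order to \emph{extend} an $n$-dimensional sublattice by one atom. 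More substantially, for the inequalities $\lim_n\inf_Y\alpha(Y)\geq 2^{1/q}$ ($q>s(X)$) and $\lim_n\sup_Y\beta(Y)\leq 2^{1/q}$ ($q<S(X)$) the paper invokes Rosenthal's block finite representability theorem (the ``ultraproduct trick'') to extract an almost isometric $\ell_p^k$ from the atoms of $Y$ and then compares $p$ with $q$; your dyadic iteration of $\beta(Y)\min\{\|f\|,\|g\|\}\leq\|f+g\|\leq\alpha(Y)\max\{\|f\|,\|g\|\}$ against the $q$-estimate reaches the same bounds with no machinery, and it also sidesteps, for this half of the proof, the very concern you raise at the end about near-optimal disjoint families not spanning nice sublattices. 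The remaining half --- producing $n$-dimensional sublattices almost lattice-isometric to $\ell_{s(X)}^n$ and $\ell_{S(X)}^n$ --- rests on exactly the same external input in both proofs, namely Krivine's theorem for Banach lattices in the form of \cite{Schep} with constants tending to $1$, so your acknowledged dependency is no stronger than the paper's. Two small points deserve a sentence if you write this up: the existence of $n$-dimensional sublattices for every $n$ uses the classical fact that an infinite-dimensional Archimedean lattice contains arbitrarily large disjoint families, and passing from $\alpha(\ell_p^n)=\beta(\ell_p^n)=2^{1/p}$ to a $(1+\varepsilon)$-lattice-isomorphic copy costs a factor $(1+\varepsilon)$ via the $\alpha$- and $\beta$-analogues of Proposition \ref{+schaefer under isomorphisms}, which is harmless as $\varepsilon\to0$.
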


\begin{comment}
Note that the sequences $\big(  \inf_{Y \subset X, \dim Y = n} \sup_{Z \subset Y} \lambda^+(Z) \big)_n$ and $\big( \inf_{Y \subset X, \dim Y = n} \sup_{Z \subset Y} \beta(Z) \big)_n$ are increasing in $n$. Indeed, if $W$ is a sublattice of $Y$, then
$\sup_{Z \subset Y} \lambda^+(Z) \geq \sup_{Z \subset W} \lambda^+(Z)$. It is easy to see that for every $n$-dimensional sublattice $W \subset X$ there exists a sublatitce $Y$ of dimension $n+1$ so that $W \subset Y \subset X$, hence
$$
\inf_{Y \subset X, \dim Y = {n+1}} \sup_{Z \subset Y} \lambda^+(Z) \geq \inf_{W \subset X, \dim W = n} \sup_{Z \subset W} \lambda^+(Z) , 
$$
and similarly with $\beta$. Thus, the limits in Proposition \ref{local vs global} exist.    
\end{comment}

\begin{proof}
We shall establish \eqref{eq:inf sup}, as \eqref{eq:convex sup inf} is handled similarly.

By the local reflexivity results of \cite{bernau} (for the duality of upper and lower estimates, see also \cite[Proposition 1.f.5]{lindenstrauss-tzafriri}), both the left and the right hand sides remain the same if we can replace $X$ by $X^{**}$. We therefore assume that $X$ is order complete.
Then for every $n$-dimensional sublattice $W \subset X$ there exists a sublattice $Y$ of dimension $n+1$ so that $W \subset Y \subset X$, hence
$$
\inf_{Y \subset X, \dim Y = {n+1}} \sup_{Z \subset Y} \lambda^+(Z) \geq \inf_{W \subset X, \dim W = n} \sup_{Z \subset W} \lambda^+(Z) , 
$$
and similarly with $\beta$. Thus, the sequences $\big(  \inf_{Y \subset X, \dim Y = n} \sup_{Z \subset Y} \lambda^+(Z) \big)_n$ and $\big( \inf_{Y \subset X, \dim Y = n} \sup_{Z \subset Y} \beta(Z) \big)_n$ are increasing in $n$, hence convergent.

    We use a version of Krivine's Theorem \cite{Schep}: if $W$ is an infinite dimensional Banach lattice, then for every $k \in \N$ and $\varepsilon > 0$, there exist disjoint positive norm one $w_1, \ldots, w_k \in W$ so that, for any scalars $\alpha_1, \ldots, \alpha_k$,
    $$ (1+\varepsilon)^{-1} \big( \sum_i |\alpha_i|^{s(W)} \big)^{1/s(W)} \leq \big\| \sum_i \alpha_i w_i \big\| \leq (1+\varepsilon) \big( \sum_i |\alpha_i|^{s(W)} \big)^{1/s(W)} . $$
    For $W$ as above, Example \ref{example L_p} gives $\lambda^+(W) \leq \beta(W) \leq (1+\varepsilon)^2 2^{1/s(W)}$. In \eqref{eq:inf sup}, we can take $Y = Z$ to be the $k$-dimensional $W$ as above. Thus,
    $$ 2^{1/s(X)} \geq \lim_n \inf_{Y \subset X, \dim Y = n} \sup_{Z \subset Y} \lambda^+(Z) \quad\mbox{and}\quad  2^{1/s(X)} \geq\lim_n \inf_{Y \subset X, \dim Y = n} \sup_{Z \subset Y} \beta(Z) . $$

    To prove the converse, we use an ``ultraproduct trick'' (see e.g.~\cite[Theorem 3.6]{Rosenthal}): for every $k \in \N$ and $\varepsilon > 0$ there exists $N = N(k,\varepsilon)$ so that, for any $n \geq N$, and any norm one disjoint $x_1, \ldots, x_n$ there exist disjoint norm one $z_1, \ldots, z_k$ in the span of $x_1, \ldots x_n$, so that, for some $p \in [1,\infty]$, the inequality
    $$ (1+\varepsilon)^{-1} \big( \sum_i |\alpha_i|^p \big)^{1/p} \leq \big\| \sum_i \alpha_i z_i \big\| \leq (1+\varepsilon) \big( \sum_i |\alpha_i|^p \big)^{1/p} $$
    holds for all scalars $\alpha_1, \ldots, \alpha_k$.
    In particular, if $Y$ is an $n$-dimensional sublattice of $X$, it contains a further sublattice $Z$ spanned by $z_1, \ldots, z_k$ as above; then
    $$ \sup_{Z \subset Y} \lambda^+(Z)\geq (1+\varepsilon)^{-2} 2^{1/p}\quad \mbox{and}\quad \sup_{Z \subset Y} \beta(Z) \geq (1+\varepsilon)^{-2} 2^{1/p} . $$
Fix $q > s(X)$, then $X$ has lower $q$-estimate with the constant $C_q$. Using the definition of lower $q$-estimate on $z_1, \ldots, z_k$, we obtain $k^{1/q} \leq C_q (1+\varepsilon) k^{1/p}$. Consequently,
$$ \frac1p \geq \frac1q - \frac{\ln((1+\varepsilon)C_q)}{\ln k} , $$
and so,
$$\sup_{Z \subset Y} \lambda^+(Z), \sup_{Z \subset Y} \beta(Z) \geq (1+\varepsilon)^{-2} 2^{1/q} 2^{-\ln((1+\varepsilon)C_q)/\ln k} .$$
%$$\lambda^+(Z) \geq (1+\varepsilon)^{-1} 2^{1/q} 2^{-\ln((1+\varepsilon)C_q)/\ln k}\quad \mbox{and}\quad \beta(Z) \geq (1+\varepsilon)^{-1} 2^{1/q} 2^{-\ln((1+\varepsilon)C_q)/\ln k} .$$
Taking $n$ to be large enough, we can make $\varepsilon$ and $k$ to be as small as possible and as large as possible, respectively. Thus,
$$
2^{1/q} \leq \lim_n \inf_{Y \subset X, \dim Y = n} \sup_{Z \subset Y} \lambda^+(Z) \quad\mbox{and}\quad 2^{1/q} \leq  \lim_n \inf_{Y \subset X, \dim Y = n} \sup_{Z \subset Y} \beta(Z) .
$$
As $q > s(X)$ is arbitrary, we conclude that
$$
2^{1/s(X)} \leq \lim_n \inf_{Y \subset X, \dim Y = n} \sup_{Z \subset Y} \lambda^+(Z) \quad\mbox{and}\quad 2^{1/s(X)} \leq\lim_n \inf_{Y \subset X, \dim Y = n} \sup_{Z \subset Y} \beta(Z) ,
$$
thus establishing \eqref{eq:inf sup}.

To establish \eqref{eq:convex sup inf}, a similar reasoning is involved; instead of Example \ref{example L_p} one uses the equality $\alpha(L_p) = 2^{1/p}$ from \cite{borwein-sims}.
\end{proof}

Now we characterize the Banach lattices $X$ with $\lambda^+(X)=1$.

\begin{theorem}\label{characterization of banach lattices having l>1}
A Banach lattice $X$ contains lattice-almost isometric copies of $\ell_\infty^2$ if and only if $\lambda^+(X)=1$.
\end{theorem}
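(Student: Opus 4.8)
The plan is to prove the two implications separately; the forward one is immediate from the machinery already in place, and only the converse requires a construction.

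For the forward implication, suppose $X$ contains lattice-almost isometric copies of $\ell_\infty^2$. Then Corollary \ref{+schaefer and subspaces} gives $\lambda^+(X)\le\lambda^+(\ell_\infty^2)$, and a one-line computation shows $\lambda^+(\ell_\infty^2)=1$: the canonical basis vectors $e_1,e_2\in S_{\ell_\infty^2}^+$ satisfy $\|e_1+e_2\|_\infty=1$, so by the first formula of Proposition \ref{formulas for lambda and alpha} we get $\lambda^+(\ell_\infty^2)\le1$, while $\lambda^+(Z)\ge1$ for \emph{every} Banach lattice $Z$ since $x+y\ge x\ge{\bf 0}$ forces $\|x+y\|\ge\|x\|=1$ for $x,y\in S_Z^+$. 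Hence $\lambda^+(X)\le1$, and combined with $\lambda^+(X)\ge1$ this gives $\lambda^+(X)=1$.

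For the converse, assume $\lambda^+(X)=1$; I want to produce, for each $\varepsilon>0$, a sublattice of $X$ that is a lattice copy of $\ell_\infty^2$ with distortion at most $1+\varepsilon$. Fix $\delta\in(0,1)$ small enough that $(1+\delta)/(1-\delta)\le1+\varepsilon$, and use Proposition \ref{formulas for lambda and alpha} to choose $x,y\in S_X^+$ with $\|x+y\|<1+\delta$. The heuristic is that such $x,y$ must be ``nearly disjoint'', and the natural candidates for the desired $\ell_\infty^2$-basis are the (automatically disjoint) vectors $a:=(x-y)^+$ and $b:=(y-x)^+$, suitably normalized. The one point that needs care is to show $\|a\|,\|b\|$ are close to $1$, so that normalizing is harmless; this is exactly where I would invoke the standard fact that a positive vector of a Banach lattice is normed by a positive functional (norm $x$, resp.\ $y$, by a norm-one functional and replace it by its modulus). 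Choosing $\phi,\psi\in B_{X^*}^+$ with $\phi(x)=\psi(y)=1$, we get $\phi(y)=\phi(x+y)-\phi(x)<\delta$ and likewise $\psi(x)<\delta$, whence $\phi(a)\ge\phi(x)-\phi(y)>1-\delta$ and $\psi(b)>1-\delta$; together with $a\le x$, $b\le y$ this yields $1-\delta<\|a\|,\|b\|\le1$, and in particular $a,b\neq{\bf 0}$.

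Once that is in hand the rest is routine. Writing $\hat a=a/\|a\|$, $\hat b=b/\|b\|\in S_X^+$ (still disjoint) and using $a+b=|x-y|$ together with $\|\,|x-y|\,\|\le\|x+y\|$ (as $|x-y|\le x+y$ for positive $x,y$), monotonicity of the norm gives $\|\hat a+\hat b\|\le\|a+b\|/\min\{\|a\|,\|b\|\}<(1+\delta)/(1-\delta)\le1+\varepsilon$. Then $V:=\operatorname{span}\{\hat a,\hat b\}$ is a sublattice of $X$, and $(s,t)\mapsto s\hat a+t\hat b$ is a lattice isomorphism of $\ell_\infty^2$ onto $V$ with $\max\{|s|,|t|\}\le\|s\hat a+t\hat b\|\le(1+\varepsilon)\max\{|s|,|t|\}$, hence of distortion $\le1+\varepsilon$; letting $\varepsilon\to0$ finishes the proof. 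I do not anticipate a real obstacle here — the only slightly delicate step is the lower bound $\|a\|,\|b\|>1-\delta$, which is precisely what the positive norming functionals deliver. As a byproduct the argument shows that $\lambda^+(X)=1$ forces $\beta(X)=1$, so that these two parameters — which differ in general, cf.\ Example \ref{counterexample} — coincide at the extreme value $1$.
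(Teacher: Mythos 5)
Your proof is correct and follows essentially the same route as the paper: both arguments take $x,y\in S_X^+$ with $\|x+y\|<1+\varepsilon$, disjointify them via $(x-y)^+$ and $(y-x)^+$ (the paper writes these as $x-x\wedge y$ and $y-x\wedge y$), show these have norm at least $1-\varepsilon$, and then check that they span a $(1+\varepsilon)$-lattice copy of $\ell_\infty^2$. The only difference is the lower bound on $\|(x-y)^+\|$, which you obtain by duality via positive norming functionals while the paper uses the triangle-inequality estimate $\|x'\|\ge 2\|x'+z\|-\|x'+2z\|$; both are valid, and your closing observation that $\beta(X)=1$ follows as well agrees with the corollary the paper derives from this theorem.
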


\begin{proof}
If $X$ contains lattice-almost isometric copies of $\ell_\infty^2$, by Corollary \ref{+schaefer and subspaces} we obtain $1\leq \lambda^+(X)\leq \lambda^+(\ell_\infty^2)=1$. Conversely, assume that $\lambda^+(X)=1$ and let $c>1$ be given. Our goal is to find a lattice isomorphism $T = T_c : \ell_\infty^2 \to X$ so that $\|T\| \|T^{-1}\| < c$. 

Fix $\varepsilon > 0$ so that $\frac{1+\varepsilon}{1-\varepsilon} < c$. From the definition of $\lambda^+(X)$, there are $x_\varepsilon,y_\varepsilon\in S_X^+$  such that  $1\leq\|x_\varepsilon+y_\varepsilon\|<1+\varepsilon$.
If $a,b\in\mathbb R$, then 
\begin{align*}
    |ax_\varepsilon+by_\varepsilon|\leq|a|x_\varepsilon+|b|y_\varepsilon\leq\max\{|a|,|b|\}(x_\varepsilon+y_\varepsilon).
\end{align*}
So, $\|ax_\varepsilon+by_\varepsilon\|\leq\max\{|a|,|b|\}(1+\varepsilon)$. Also, as $x_\varepsilon,y_\varepsilon\in S_X^+$, we have $\max\{|a|,|b|\}\leq\|ax_\varepsilon+by_\varepsilon\|$ if $a$ and $b$ have the same sign.

Now note that $x_\varepsilon - y_\varepsilon = 2 x_\varepsilon - (x_\varepsilon + y_\varepsilon)$, hence
\begin{equation}
\label{eq:difference}
\|x_\varepsilon - y_\varepsilon\| \geq 2 \|x_\varepsilon\| - \|x_\varepsilon + y_\varepsilon\| \geq 1 - \varepsilon .
\end{equation}
Let $z_\varepsilon = x_\varepsilon \wedge y_\varepsilon$, $x'_\varepsilon = x_\varepsilon - z_\varepsilon$, and $y'_\varepsilon = y_\varepsilon - z_\varepsilon$. Clearly $x'_\varepsilon, y'_\varepsilon$ are positive and disjoint.
Further, $x'_\varepsilon - y'_\varepsilon = x_\varepsilon - y_\varepsilon$, hence $x'_\varepsilon + y'_\varepsilon = |x_\varepsilon - y_\varepsilon|$. By \eqref{eq:difference},
$\|x'_\varepsilon + y'_\varepsilon\| = \|x_\varepsilon - y_\varepsilon\| \geq 1 - \varepsilon$.

We clearly have $x'_\varepsilon \leq x_\varepsilon$, hence $\|x'_\varepsilon\| \leq 1$. In fact, $\|x'_\varepsilon + z_\varepsilon\| = \|x_\varepsilon\| = 1$,
while
$$
\|x'_\varepsilon + 2z_\varepsilon\| \leq \|x'_\varepsilon + y'_\varepsilon + 2z_\varepsilon\| = \|x_\varepsilon + y_\varepsilon\| \leq 1+\varepsilon .
$$
By the triangle inequality,
$$
\|x'_\varepsilon\| \geq 2 \|x'_\varepsilon + z_\varepsilon\| - \|x'_\varepsilon + 2z_\varepsilon\| \geq 1 - \varepsilon ,
$$
and similarly, $\|y'_\varepsilon\| \geq 1 - \varepsilon$. For any $a,b \in \mathbb R$, we have
$$
\|a x'_\varepsilon + b y'_\varepsilon\| \leq \| |a| x_\varepsilon + |b| y_\varepsilon\| \leq (1+\varepsilon) \max \{ |a|, |b| \} ,
$$
and also,
$$
\|a x'_\varepsilon + b y'_\varepsilon\| \geq \max \{ |a| \|x'_\varepsilon\| , |b| \|y'_\varepsilon\| \} \geq (1-\varepsilon) \max \{ |a|, |b| \} ,
$$
Thus, the operator
$$
T : \ell_\infty^2 \to X : (a,b) \mapsto a x'_\varepsilon + b y'_\varepsilon 
$$
has the desired properties.
\begin{comment}
Next we estimate $\|x_\varepsilon - y_\varepsilon\|$ from below.  
Now suppose $a$ and $b$ are on opposite sides of zero; up to relabeling, we can assume that $a \geq -b \geq 0$. Applying the triangle inequality to
$$
ax_\varepsilon + by_\varepsilon = 2ax_\varepsilon - (ax_\varepsilon - by_\varepsilon) ,
$$
we obtain
$$
\|ax_\varepsilon + by_\varepsilon\| \geq \|2ax_\varepsilon\| - \|ax_\varepsilon - by_\varepsilon\| \geq 2a - a(1+\varepsilon) = (1-\varepsilon)a .
$$
Thus, $\|ax_\varepsilon + by_\varepsilon\| \geq \max\{|a|,|b|\}(1-\varepsilon)$ for any $a,b \in \mathbb R$.

Define $T_\varepsilon\colon \ell_\infty^2\to X$ via $T_\varepsilon(a,b)= ax_\varepsilon+by_\varepsilon\in X$, $(a,b)\in\ell_\infty^2$. As shown above, $T$ is a Banach lattice isomorphism with $\|T_\varepsilon\|\|T_\varepsilon^{-1}\|\leq \frac{1+\varepsilon}{1-\varepsilon}$. Therefore, $\lambda^+(X) \leq \frac{1+\varepsilon}{1-\varepsilon}$ holds for any $\varepsilon > 0$.
\end{comment}
\end{proof}

In contrast to Example \ref{counterexample}, we obtain: % Theorem \ref{characterization of banach lattices having l>1} gives us:

\begin{corollary}
    Let $X$ be a Banach lattice. Then $\lambda^+(X)>1$ if and only if $\beta(X)>1$.
\end{corollary}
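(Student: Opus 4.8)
The plan is to combine the characterization of the extremal case $\lambda^+(X)=1$ with the elementary monotonicity $\lambda^+\le\beta$. One implication is immediate: since $\lambda^+(X)\le\beta(X)$ holds for every Banach lattice $X$, the hypothesis $\lambda^+(X)>1$ forces $\beta(X)>1$.

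For the converse I would argue by contraposition. Assume $\lambda^+(X)=1$ (recall that $\lambda^+(X)\ge 1$ always). By Theorem \ref{characterization of banach lattices having l>1}, $X$ then contains lattice-almost isometric copies of $\ell_\infty^2$. Applying Corollary \ref{+schaefer and subspaces} with $\ell_\infty^2$ playing the role of the ``small'' lattice and $X$ that of the ``ambient'' one gives $\beta(X)\le\beta(\ell_\infty^2)$.

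It then remains to observe that $\beta(\ell_\infty^2)=1$: if $x,y\in S_{\ell_\infty^2}^+$ satisfy $x\wedge y={\bf0}$, disjointness annihilates one coordinate of each vector and the normalization fixes the other, so, up to relabeling, $x=(1,0)$ and $y=(0,1)$, whence $\|x\vee y\|_\infty=\|(1,1)\|_\infty=1$. Since $\dim\ell_\infty^2=2$, this value is meaningful, and we conclude $\beta(X)\le 1$; together with $\beta(X)\ge\lambda^+(X)\ge 1$ this forces $\beta(X)=1$. Contrapositively, $\beta(X)>1$ implies $\lambda^+(X)>1$, which finishes the equivalence. I do not anticipate a genuine obstacle here; the only points requiring a moment's care are invoking Corollary \ref{+schaefer and subspaces} with the correct orientation of the two lattices and the (trivial) computation of $\beta(\ell_\infty^2)$.
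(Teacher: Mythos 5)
Your proof is correct and follows essentially the same route as the paper: the forward direction from $\lambda^+\le\beta$, and the converse by contraposition via Theorem \ref{characterization of banach lattices having l>1} and Corollary \ref{+schaefer and subspaces}, using $\beta(\ell_\infty^2)=1$. The only difference is that you spell out the (trivial) computation of $\beta(\ell_\infty^2)$, which the paper leaves implicit.
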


\begin{proof}
Since $\lambda^+(X)\leq\beta(X)$, $\lambda^+(X)>1$ implies $\beta(X)>1$. Conversely, if $\lambda^+(X)=1$, then, by Theorem \ref{characterization of banach lattices having l>1}, $X$ contains lattice-almost isometric copies of $\ell_\infty^2$, hence, by Corollary \ref{+schaefer and subspaces}, $\beta(X)\leq\beta(\ell_\infty^2)=1$.
\end{proof}

% \section{Some properties of the lattice Sch\"affer constant}
\section{Connection between the lattice Sch\"affer constant and other parameters}
\label{algunas propiedades de lamba^+}

We begin by relating the lattice Sch\"affer constant $\lambda^+(X)$ and the characteristic of monotonicity $\tilde\varepsilon_{0,m}(X).$

\begin{proposition}
    Let $X$ be a Banach lattice. For each $\varepsilon\in[0,1]$ we have
    \begin{gather}\label{relationship between sigma and lambda}
        \lambda^+(X)\leq\sigma_X(\varepsilon)+2-\varepsilon.
    \end{gather}
   Consequently, $\tilde\varepsilon_{0,m}(X)\leq 2-\lambda^+(X).$
\end{proposition}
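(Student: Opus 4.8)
The plan is to leverage the alternative description $\lambda^+(X)=\inf\{\|u+v\|\,\colon\,u,v\in S_X^+\}$ from Proposition \ref{formulas for lambda and alpha} together with a one-line decomposition of $x+y$ that isolates the combination $x+\varepsilon y$ appearing in the definition of $\sigma_X(\varepsilon)$. Concretely, I would fix $\varepsilon\in[0,1]$ and an arbitrary $\delta>0$, and use the definition of $\sigma_X$ to pick $x,y\in S_X^+$ with $\|x+\varepsilon y\|<1+\sigma_X(\varepsilon)+\delta$.

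The key observation is the splitting $x+y=(x+\varepsilon y)+(1-\varepsilon)y$, which exhibits $x+y$ as a sum of a vector of norm less than $1+\sigma_X(\varepsilon)+\delta$ and the positive vector $(1-\varepsilon)y$ of norm exactly $1-\varepsilon$. The triangle inequality then gives $\|x+y\|<\sigma_X(\varepsilon)+2-\varepsilon+\delta$. Since $x,y\in S_X^+$, Proposition \ref{formulas for lambda and alpha} yields $\lambda^+(X)\leq\|x+y\|<\sigma_X(\varepsilon)+2-\varepsilon+\delta$, and letting $\delta\to0^+$ establishes \eqref{relationship between sigma and lambda}.

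For the ``consequently'' part, I would specialize $\varepsilon$ to $\tilde\varepsilon_{0,m}(X)$, which lies in $[0,1]$ (indeed in $[0,1)$ or equals its supremum), so that \eqref{relationship between sigma and lambda} applies. By Corollary \ref{sigma(e_0)=0} we have $\sigma_X(\tilde\varepsilon_{0,m}(X))=0$, so \eqref{relationship between sigma and lambda} becomes $\lambda^+(X)\leq 2-\tilde\varepsilon_{0,m}(X)$, i.e.\ $\tilde\varepsilon_{0,m}(X)\leq 2-\lambda^+(X)$.

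I do not expect a genuine obstacle here: the argument is essentially a triangle-inequality estimate. The only points requiring a modicum of care are that $\sigma_X(\varepsilon)$ is defined as an infimum, so one must argue with near-optimal $x,y$ and pass to the limit in $\delta$, and that one must confirm $\tilde\varepsilon_{0,m}(X)\in[0,1]$ in order to legitimately plug it into \eqref{relationship between sigma and lambda} before invoking Corollary \ref{sigma(e_0)=0}.
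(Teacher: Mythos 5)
Your argument is correct and is essentially the paper's own proof unpacked: the decomposition $x+y=(x+\varepsilon y)+(1-\varepsilon)y$ is exactly the triangle-inequality computation behind the $1$-Lipschitz continuity of $\sigma_X$ (Lemma \ref{continuity of eta}), which the paper invokes together with the identity $\sigma_X(1)=\lambda^+(X)-1$ to get $\lambda^+(X)-1-\sigma_X(\varepsilon)\leq 1-\varepsilon$. The ``consequently'' step, via Corollary \ref{sigma(e_0)=0}, is handled identically in both.
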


\begin{proof}
If $\varepsilon\in[0,1]$ is given, by Lemma \ref{continuity of eta} we obtain
\begin{gather*}
        \lambda^+(X)-1-\sigma_X(\varepsilon)=|\sigma_X(1)-\sigma_X(\varepsilon)|\leq 1-\varepsilon.
    \end{gather*}
Thus, $\lambda^+(X)\leq\sigma_X(\varepsilon)+2-\varepsilon.$ The second inequality follows by taking $\varepsilon=\tilde\varepsilon_0(X)$ in \eqref{relationship between sigma and lambda} and recalling that $\sigma_X(\tilde\varepsilon_{0,m}(X))=0$ (Corollary \ref{sigma(e_0)=0}).
\end{proof}

\begin{proposition}\label{tilde e_0<1 equivale a lambda>1}
    Let $X$ be a Banach lattice. Then, $\tilde\varepsilon_0(X)<1$ if and only if $\lambda^+(X)>1$.
\end{proposition}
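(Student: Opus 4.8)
The plan is to reduce both implications to the elementary identity $\sigma_X(1) = \lambda^+(X) - 1$ together with the monotonicity and nonnegativity of $\varepsilon \mapsto \sigma_X(\varepsilon)$. First I would record these preliminary facts. For $x, y \in S_X^+$ we have $|x-y| \le x+y$, so $\max\{\|x+y\|, \|x-y\|\} = \|x+y\|$, and hence $\lambda^+(X) = \inf\{\|x+y\| : x, y \in S_X^+\} = \sigma_X(1) + 1$ by the very definition of $\sigma_X$. Next, since $x \le x + \varepsilon y$ whenever $x, y \ge 0$ and $\varepsilon \ge 0$, monotonicity of the lattice norm gives $\|x + \varepsilon y\| \ge \|x\| = 1$, so $\sigma_X \ge 0$; and for fixed positive $x, y$ the map $\varepsilon \mapsto \|x + \varepsilon y\|$ is non-decreasing on $[0,1]$, so passing to the infimum shows $\sigma_X$ is non-decreasing on $[0,1]$.

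For the direction ``$\tilde\varepsilon_{0,m}(X) < 1 \Rightarrow \lambda^+(X) > 1$'', I would choose $\varepsilon^* \in (\tilde\varepsilon_{0,m}(X), 1)$. Since $\varepsilon^*$ lies in $[0,1)$ and strictly exceeds the supremum $\tilde\varepsilon_{0,m}(X)$ of the set $\{\varepsilon \in [0,1) : \sigma_X(\varepsilon) = 0\}$, it cannot belong to that set, so $\sigma_X(\varepsilon^*) > 0$ (using $\sigma_X \ge 0$). By monotonicity, $\lambda^+(X) - 1 = \sigma_X(1) \ge \sigma_X(\varepsilon^*) > 0$, i.e. $\lambda^+(X) > 1$. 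For the converse, if $\lambda^+(X) > 1$ then the inequality $\tilde\varepsilon_{0,m}(X) \le 2 - \lambda^+(X)$ proved just above immediately yields $\tilde\varepsilon_{0,m}(X) < 1$. (Equivalently, in contrapositive form: if $\lambda^+(X) = 1$ then $\sigma_X(1) = 0$, and since $\sigma_X \ge 0$ is non-decreasing this forces $\sigma_X \equiv 0$ on $[0,1)$, whence $\tilde\varepsilon_{0,m}(X) = \sup[0,1) = 1$.)

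I do not anticipate a real obstacle: the whole argument rests on the identity $\sigma_X(1) = \lambda^+(X) - 1$ and on $\sigma_X$ being a nonnegative non-decreasing function, all of which follow at once from the definitions and the monotonicity of the lattice norm. The only place asking for slight care is the bookkeeping around the supremum defining $\tilde\varepsilon_{0,m}(X)$ — specifically, arguing that a point of $[0,1)$ strictly above $\tilde\varepsilon_{0,m}(X)$ must have $\sigma_X$ strictly positive there.
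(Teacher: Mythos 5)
Your proof is correct and follows essentially the same route as the paper's: both directions rest on the identity $\sigma_X(1)=\lambda^+(X)-1$ together with the monotonicity of $\varepsilon\mapsto\|x+\varepsilon y\|$ on the positive cone. The only cosmetic difference is in the converse, where the paper invokes the $1$-Lipschitz continuity of $\sigma_X$ to produce some $a<1$ with $\sigma_X(a)>0$, while you instead cite the already-proved bound $\tilde\varepsilon_{0,m}(X)\le 2-\lambda^+(X)$ (or, equivalently, argue by contrapositive from the nonnegativity and monotonicity of $\sigma_X$); both are valid and of the same depth.
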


\begin{proof}
    Suppose that $\tilde\varepsilon_0(X)<1$ and let $\tilde\varepsilon_0(X)<a<1$. Then $\sigma_X(a)>0$. If $x,y\in S_X^+$, we have
    \begin{gather*}
        \|x+y\|\geq\|x+ay\|\geq1+\sigma_X(a).
    \end{gather*}
    Consequently, $\lambda^+(X)\geq1+\sigma_X(a)>1$. Conversely, if $\lambda^+(X)>1$, then $\sigma_X(1)>0$. By Lemma \ref{continuity of eta} there is
    $a<1$ such that $\sigma_X(a)>0$. Thus, $\tilde\varepsilon_0(X)<a<1$.
\end{proof}

\begin{theorem}\label{lambda^+>1 implica que e_0<1}
    If $X$ is a Banach lattice, then 
    \begin{gather*}
        \delta_{m,X}\left(\frac{1}{\lambda^+(X)}\right)=\frac{\lambda^+(X)-1}{\lambda^+(X)}.
    \end{gather*}
    Consequently, $\varepsilon_{0,m}(X)\leq\dfrac{1}{\lambda^+(X)}$. Moreover, $\dfrac{1}{1-\delta_{m,X}(1/2)}\leq\lambda^+(X)$.
    % the following inequality holds
    % \begin{gather*}
   %     \frac{1}{1-\delta_{m,X}(1/2)}\leq\lambda^+(X).
   % \end{gather*}
\end{theorem}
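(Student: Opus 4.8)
The plan is to establish the identity $\delta_{m,X}(1/\lambda^+(X)) = (\lambda^+(X)-1)/\lambda^+(X)$ by a two-sided estimate, using the reformulation of $\lambda^+(X)$ from Proposition \ref{formulas for lambda and alpha}. Write $\lambda = \lambda^+(X)$ for brevity. For the inequality $\delta_{m,X}(1/\lambda) \geq (\lambda-1)/\lambda$, I would take $\mathbf{0} \leq y \leq x$ with $\|x\| \leq 1$ and $\|y\| \geq 1/\lambda$; the aim is to show $\|x - y\| \leq 1 - (\lambda-1)/\lambda = 1/\lambda$. Set $u = x - y \geq \mathbf{0}$ and $v = y \geq \mathbf{0}$, so $u + v = x$ and $\|u + v\| \leq 1$. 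If $u = \mathbf{0}$ we are done since $\|x-y\|=0$; otherwise apply the last formula in \eqref{formulas for lambda} (the infimum over $x,y \in X^+$ with $\min\{\|x\|,\|y\|\}\geq 1$, equivalently a scaling of the basic formula $\|x'+y'\| \geq \lambda$ for $x',y' \in S_X^+$) to $u/\|u\|$ and $v/\|v\|$: this gives $\|u+v\| \geq \lambda \min\{\|u\|, \|v\|\}$. Hence $\lambda \|u\| \leq \lambda \min\{\|u\|,\|v\|\} \cdot (\text{something})$ — more precisely, $\|u\| \leq \|u+v\|/\lambda \leq 1/\lambda$ provided $\|u\| \leq \|v\|$; when $\|u\| > \|v\|$, then $\|v\| < \|u\|$ forces $\min = \|v\|$ and we instead get $\|v\| \leq 1/\lambda$, contradicting $\|y\| = \|v\| \geq 1/\lambda$ unless equality holds throughout, which still yields $\|u\| \le 1/\lambda$ after a short argument. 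So $\|x - y\| = \|u\| \leq 1/\lambda$, giving $\delta_{m,X}(1/\lambda) \geq (\lambda-1)/\lambda$.

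For the reverse inequality $\delta_{m,X}(1/\lambda) \leq (\lambda-1)/\lambda$, I would, given $\varepsilon > 0$, pick $x', y' \in S_X^+$ with $\|x'+y'\| < \lambda + \varepsilon$. Set $x = (x'+y')/\|x'+y'\|$ and $y = x'/\|x'+y'\|$. Then $\mathbf{0} \leq y \leq x$ (since $x' \leq x' + y'$), $\|x\| = 1$, and $\|y\| = 1/\|x'+y'\| > 1/(\lambda+\varepsilon)$; a minor adjustment (scaling $y$ up slightly or using continuity of $\delta_{m,X}$ on $[0,1)$) handles the discrepancy between $1/(\lambda+\varepsilon)$ and $1/\lambda$. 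Now $x - y = y'/\|x'+y'\|$, so $\|x-y\| = 1/\|x'+y'\| > 1/(\lambda+\varepsilon)$, whence $1 - \|x-y\| < 1 - 1/(\lambda+\varepsilon) = (\lambda + \varepsilon - 1)/(\lambda+\varepsilon)$. Taking the infimum and letting $\varepsilon \to 0^+$ gives $\delta_{m,X}(1/\lambda) \leq (\lambda-1)/\lambda$, using monotonicity of $\delta_{m,X}$ to absorb the shift in the argument of the modulus.

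The two consequences follow formally: since $\delta_{m,X}(1/\lambda) = (\lambda-1)/\lambda > 0$ whenever $\lambda > 1$, the definition of $\varepsilon_{0,m}(X)$ as the supremum of $\varepsilon$ with $\delta_{m,X}(\varepsilon) = 0$ gives $\varepsilon_{0,m}(X) \leq 1/\lambda$ (if $\lambda = 1$ this is trivial since $\varepsilon_{0,m}(X) \leq 1$). For the last inequality, evaluate the modulus at $1/2$: from the main identity with $\varepsilon = 1/2$ we cannot directly read it off unless $1/\lambda = 1/2$, so instead I would use monotonicity of $\delta_{m,X}$ — if $\lambda \geq 2$ then $1/\lambda \leq 1/2$ so $\delta_{m,X}(1/2) \geq \delta_{m,X}(1/\lambda) = 1 - 1/\lambda$, giving $1/(1-\delta_{m,X}(1/2)) \leq \lambda$; if $\lambda < 2$ then $1/2 < 1/\lambda$ and $\delta_{m,X}(1/2) \leq \delta_{m,X}(1/\lambda) = 1-1/\lambda$, which rearranges to the same conclusion $1/(1-\delta_{m,X}(1/2)) \leq \lambda$. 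I expect the main obstacle to be the careful bookkeeping in the first (lower-bound) direction when $\|u\|$ and $\|v\|$ are compared — ensuring the case $\|u\| > \|v\|$ genuinely cannot produce a violation of the constraint $\|y\| \geq 1/\lambda$, which requires invoking the normalization in \eqref{formulas for lambda} precisely rather than the looser inequality $\|u+v\| \geq \lambda\min\{\|u\|,\|v\|\}$.
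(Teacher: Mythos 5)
The paper's own proof is a one-liner: it substitutes $\varepsilon=1$ into the quoted Markowicz--Prus identity (Fact \ref{relationship between delta and eta}), using $\sigma_X(1)=\lambda^+(X)-1$. You instead try to reprove that identity from scratch for $\varepsilon=1$, and the hard half does not go through. In your lower bound $\delta_{m,X}(1/\lambda)\geq(\lambda-1)/\lambda$ you must show that $\mathbf{0}\leq y\leq x$, $\|x\|\leq 1$, $\|y\|\geq 1/\lambda$ force $\|x-y\|\leq 1/\lambda$. Writing $u=x-y$, $v=y$, the normalized form of \eqref{formulas for lambda} only yields $\|u+v\|\geq\lambda\min\{\|u\|,\|v\|\}$. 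When $\|u\|\leq\|v\|$ this closes the case, but when $\|u\|>\|v\|$ it merely forces $\|v\|=1/\lambda$ and $\|u+v\|=1$, which is entirely consistent with $\|u\|>1/\lambda$: the ``short argument'' you invoke is never supplied, and no contradiction follows from the equalities you extract. Closing this case needs an extra idea --- for instance, set $t=\lambda\|u\|-1>0$, observe that $(1+t)\tfrac{u}{\|u\|}+\tfrac{v}{\|v\|}\leq\lambda(u+v)$ has norm at most $\lambda$, while $\bigl\|(1+t)\tfrac{u}{\|u\|}+\tfrac{v}{\|v\|}\bigr\|\geq(1+t)\bigl(1+\sigma_X(\tfrac1{1+t})\bigr)\geq\lambda+t(\lambda-1)$ by the $1$-Lipschitz continuity of $\sigma_X$ (Lemma \ref{continuity of eta}); this gives $t(\lambda-1)\leq 0$, a contradiction when $\lambda>1$. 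That missing step is essentially the content of the cited fact, so the gap is not cosmetic.

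Two smaller points. Your upper bound produces admissible pairs only for $\delta_{m,X}(1/(\lambda+\varepsilon))$ and then relies on continuity at $1/\lambda$; this is fine for $\lambda>1$ but does not settle the boundary case $\lambda=1$, where the identity reads $\delta_{m,X}(1)=0$ and continuity of $\delta_{m,X}$ at $1$ is not guaranteed. In the final inequality, your branch ``$\lambda\geq 2$'' reverses the inequality: from $\delta_{m,X}(1/2)\geq 1-1/\lambda$ one gets $1/(1-\delta_{m,X}(1/2))\geq\lambda$, not $\leq$. Since $\lambda^+(X)\leq 2$ always, only your (correct) $\lambda<2$ branch plus the trivial equality at $\lambda=2$ are needed, so this is repairable; note the paper instead derives this part from Fact \ref{Inequalities between delta and eta}. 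The deduction of $\varepsilon_{0,m}(X)\leq 1/\lambda^+(X)$ from the main identity is fine and matches the paper.
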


\begin{proof}
    Plugging $\varepsilon = 1$ into Fact \ref{relationship between delta and eta}, we obtain
    \begin{gather*}
        \delta_{m,X}\left(\frac{1}{\lambda^+(X)}\right)=\frac{\lambda^+(X)-1}{\lambda^+(X)}.
    \end{gather*}
    Now, the first inequality holds when $\lambda^+(X)=1$. Now, if $\lambda^+(X)>1$, the above equation implies that $ \delta_{m,X}\left(\frac{1}{\lambda^+(X)}\right)>0$.
    Whence, $\varepsilon_{0,m}(X)\leq1/\lambda^+(X)$. 

    On the other hand, from Fact \ref{Inequalities between delta and eta} and Lemma \ref{continuity of eta} we obtain
    \begin{gather*}
        \frac{\delta_{m,X}(1/2)}{1-\delta_{m,X}(1/2)}=\lim_{\varepsilon\to1^-}\frac{\delta_{m,X}\left(\dfrac{\varepsilon}{1+\varepsilon}\right)}{1-\delta_{m,X}\left(\dfrac{\varepsilon}{1+\varepsilon}\right)}\leq\lim_{\varepsilon\to1^-}\sigma_X(\varepsilon)=\lambda^+(X)-1.\qedhere
    \end{gather*}
\end{proof}

\begin{remark}
    In \cite{kurc} and \cite[Lemma 2.2]{betiuk-prus} it is claimed that if $X$ is Banach lattice, then
    \begin{gather}\label{formula falsa}
        \delta_{m,X}(\varepsilon)=\frac{\sigma_X(\varepsilon)}{1+\sigma_X(\varepsilon)},\quad\mbox{for all $\varepsilon\in[0,1)$.}
    \end{gather}
    However, this formula is false. Indeed, if \eqref{formula falsa} were true, from \cite[Theorem 2.1]{fora and all} and Fact \ref{continuity of eta} we would have
    \begin{gather*}
        1-\varepsilon_{0,m}(X)=\lim_{\varepsilon\to1^-}\delta_{m,X}(\varepsilon)=\lim_{\varepsilon\to1^-}\frac{\sigma_X(\varepsilon)}{1+\sigma_X(\varepsilon)}=\frac{\lambda^+(X)-1}{\lambda^+(X)}.
    \end{gather*}
    Thus, $\varepsilon_{0,m}(X)=1/\lambda^+(X)$, which is false when $X$ is uniformly monotone.

One can also reach the same conclusion by noting that $\delta_{m,L_1(0,1)}(\varepsilon)=1-\varepsilon=\sigma_{L_1(0,1)}(\varepsilon)$.
\end{remark}

\begin{proposition}\label{e_0<1 es equivalente a lambda^+>1}
     Let $X$ be a Banach lattice. Then $\lambda^+(X)>1$ if and only if $\varepsilon_{0,m}(X)<1$.
\end{proposition}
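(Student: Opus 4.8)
The plan is to extract both implications from Theorem~\ref{lambda^+>1 implica que e_0<1}, which already contains everything needed: the identity $\delta_{m,X}\big(1/\lambda^+(X)\big)=\big(\lambda^+(X)-1\big)/\lambda^+(X)$ together with its stated consequence $\varepsilon_{0,m}(X)\le 1/\lambda^+(X)$.

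For the forward implication, suppose $\lambda^+(X)>1$. Then $1/\lambda^+(X)<1$, so the inequality $\varepsilon_{0,m}(X)\le 1/\lambda^+(X)$ furnished by Theorem~\ref{lambda^+>1 implica que e_0<1} gives $\varepsilon_{0,m}(X)<1$ immediately.

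For the converse I would argue by contraposition: assume $\lambda^+(X)=1$ and show $\varepsilon_{0,m}(X)=1$. Substituting $\lambda^+(X)=1$ into the identity of Theorem~\ref{lambda^+>1 implica que e_0<1} yields $\delta_{m,X}(1)=0$. Since $\delta_{m,X}$ is non-negative and non-decreasing on $[0,1]$, this forces $\delta_{m,X}(\varepsilon)=0$ for every $\varepsilon\in[0,1)$, whence $\varepsilon_{0,m}(X)=\sup\{\varepsilon\in[0,1)\colon\delta_{m,X}(\varepsilon)=0\}=1$, as desired.

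There is essentially no obstacle here; the proof is a two-line corollary of what precedes it. The only point to watch is that $\delta_{m,X}$ is guaranteed to be continuous only on $[0,1)$, so the step ``$\delta_{m,X}(1)=0\Rightarrow\delta_{m,X}(\varepsilon)=0$ for $\varepsilon<1$'' must be justified by monotonicity rather than by continuity at the endpoint. As an alternative route for the converse, one could first note, by combining the right-hand inequality of Fact~\ref{Inequalities between delta and eta} with Fact~\ref{relationship between delta and eta}, that for $\varepsilon\in[0,1)$ one has $\delta_{m,X}(\varepsilon)=0$ if and only if $\sigma_X(\varepsilon)=0$; this gives $\varepsilon_{0,m}(X)=\tilde\varepsilon_{0,m}(X)$, and then Proposition~\ref{tilde e_0<1 equivale a lambda>1} applies directly.
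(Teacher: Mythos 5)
Your proof is correct, and while the forward implication coincides with the paper's (both read $\varepsilon_{0,m}(X)\leq 1/\lambda^+(X)<1$ off Theorem~\ref{lambda^+>1 implica que e_0<1}), your converse takes a genuinely different route. The paper, assuming $\lambda^+(X)=1$, invokes Theorem~\ref{characterization of banach lattices having l>1} to obtain lattice-almost isometric copies of $\ell_\infty^2$ and then exhibits explicit witnesses $x=T(1,1)$, $y=T(0,1)$ with ${\bf0}\leq y\leq x$, $\|x\|\leq 1$, $\|y\|\geq 1-\varepsilon$, $\|x-y\|\geq 1-\varepsilon$, forcing $\delta_{m,X}(1-\varepsilon)\leq\varepsilon$ and hence $\varepsilon_{0,m}(X)=1$. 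You instead substitute $\lambda^+(X)=1$ into the identity $\delta_{m,X}(1/\lambda^+(X))=(\lambda^+(X)-1)/\lambda^+(X)$ to get $\delta_{m,X}(1)=0$, and then use that $\delta_{m,X}$ is non-negative and non-decreasing to conclude $\delta_{m,X}\equiv 0$ on $[0,1)$; your caution about not using continuity at the endpoint is exactly right. This is shorter and adds no new dependency, since the paper itself applies Fact~\ref{relationship between delta and eta} at $\varepsilon=1$ in Theorem~\ref{lambda^+>1 implica que e_0<1}; what the paper's longer argument buys is independence from that quoted identity and an explicit structural explanation ($\ell_\infty^2$ copies) for the degeneracy of the modulus. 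One caveat on your suggested alternative route: combining Fact~\ref{Inequalities between delta and eta} with Fact~\ref{relationship between delta and eta} as you propose would give $\varepsilon_{0,m}(X)=\tilde\varepsilon_{0,m}(X)$ for \emph{every} Banach lattice, which would make the upper bound in \eqref{inequalities envolving e_0 and tilde e_0} vacuous; this is a signal to double-check the precise hypotheses of the cited facts before relying on that consequence. Your main argument, however, does not depend on it.
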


\begin{proof}
    If $\lambda^+(X)>1$, by Theorem \ref{lambda^+>1 implica que e_0<1} we have $\varepsilon_{0,m}(X)<1$. On the other hand, if $\lambda^+(X)=1$, by Theorem \ref{characterization of banach lattices having l>1}, $X$ contains lattice-almost isometric copies of $\ell_\infty^2$. 
    Fix $\varepsilon > 0$, and find a Banach lattice isomorphism $T = T_\varepsilon\colon\ell_\infty^2\to X$ such that $\|T\| = 1$, and $\|T^{-1}\| < 1+\varepsilon$. Let $x=T(1,1)$ and $y=T(0,1)$, then, ${\bf0}\leq y\leq x$, $\|x\| \leq 1$, and $\|y\| \geq 1-\varepsilon$.
    Therefore, $\delta_{m,X}(1-\varepsilon)\leq 1-\|x-y\|=1-\|T(1,0)\|\leq 1-(1-\varepsilon) = \varepsilon$. Passing to the limit as $\varepsilon \to 1$, we conclude that $\delta_{m,X}(1-)=0$, and so, $\varepsilon_{0,m}(X)=1$.
    % and $\|x\| \geq \|y\| \geq 1$. Therefore, $\delta_{m,X}(1)\leq 1-\|x-y\|=1-\|T(1,0)\|\leq 1-1 = 0$. We conclude that $\delta_{m,X}(1)=0$, and so, $\varepsilon_{0,m}(X)=1$.
\end{proof}

To state the next corollary, recall that $X$ is a \textit{KB-space} if every monotone sequence in $B_X$ is norm convergent. It is known that $X$ is a KB-space
if and only if $X$ contains no sublattice isomorphic to $c_0$ \cite[Theorem 2.4.12]{meyer}.

\begin{theorem}\label{Banach lattices with l>1 satisfacen q-estimativa}
     Let $X$ be a Banach lattice. If $\lambda^+(X)>1$, then $X$ satisfies a lower $q$-estimate for some $q\in(1,\infty)$. In particular, if $\lambda^+(X)>1$, then $X$ is a KB-space.  
\end{theorem}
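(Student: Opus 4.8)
The plan is to reduce the statement to Proposition~\ref{local vs global}. First I would dispose of the finite-dimensional case, which is automatic: in an $n$-dimensional lattice any pairwise disjoint nonzero elements are linearly independent (if $\sum c_ix_i={\bf0}$, then ${\bf0}=|\sum c_ix_i|=\sum|c_i|\,|x_i|$ forces $c_i=0$), so at most $n$ of $x_1,\dots,x_m$ are nonzero, and since $|x_i|\le|\sum_j x_j|$ with the norm monotone, $(\sum_i\|x_i\|^2)^{1/2}\le\sqrt n\,\|\sum_j x_j\|$; thus $X$ has a lower $2$-estimate, and a finite-dimensional lattice is trivially a KB-space. Henceforth assume $\dim X=\infty$.

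The key observation is that $\lambda^+$ is monotone under passage to sublattices: if $Z\subset X$ is a sublattice with $\dim Z\ge 2$, then $S_Z^+\subseteq S_X^+$ and the norms of $x+y$ and $x-y$ are computed identically in $Z$ and in $X$, so the infimum defining $\lambda^+(Z)$ runs over a subset of the pairs admissible for $\lambda^+(X)$; hence $\lambda^+(Z)\ge\lambda^+(X)$. It follows that for every $n\ge 2$ and every $n$-dimensional sublattice $Y\subset X$ one has $\sup_{Z\subset Y}\lambda^+(Z)\ge\lambda^+(Y)\ge\lambda^+(X)$, and therefore
$$
\lim_n\ \inf_{Y\subset X,\ \dim Y=n}\ \sup_{Z\subset Y}\lambda^+(Z)\ \ge\ \lambda^+(X).
$$
By the first identity of Proposition~\ref{local vs global} the left-hand side equals $2^{1/s(X)}$, so $2^{1/s(X)}\ge\lambda^+(X)>1$, which forces $s(X)<\infty$ (indeed $s(X)\le\log 2/\log\lambda^+(X)$).

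Since a lower $q_0$-estimate with constant $M$ yields a lower $q$-estimate with the same constant for every $q\ge q_0$ (because $\|\cdot\|_{\ell_q}\le\|\cdot\|_{\ell_{q_0}}$), the set of exponents for which $X$ has a lower estimate is upward closed with infimum $s(X)$; as $1\le s(X)<\infty$, any $q\in(s(X),\infty)\subseteq(1,\infty)$ gives a lower $q$-estimate with $q\in(1,\infty)$, which is the first assertion. For the second, a lower $q$-estimate with $q<\infty$ excludes a sublattice lattice-isomorphic to $c_0$: the images $e_1,\dots,e_n$ of the first $n$ unit vectors of such a copy are pairwise disjoint with $\inf_i\|e_i\|>0$ and $\sup_n\|\sum_{i=1}^n e_i\|<\infty$, contradicting $M\|\sum_{i=1}^n e_i\|\ge(\sum_i\|e_i\|^q)^{1/q}\gtrsim n^{1/q}\to\infty$. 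Hence by \cite[Theorem 2.4.12]{meyer}, $X$ is a KB-space.

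I do not anticipate a genuine obstacle: all the hard analysis (the Krivine and ultraproduct arguments) is already contained in Proposition~\ref{local vs global}. The only point requiring a little care is extracting the strict inequality $q>1$ rather than $q\ge 1$, which is why one uses $s(X)<\infty$ to pick $q$ strictly above $s(X)\ge 1$; the remaining ingredients — sublattice monotonicity of $\lambda^+$, the finite-dimensional reduction, upward closedness of the set of valid exponents, and the exclusion of $c_0$ — are routine.
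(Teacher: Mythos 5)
Your proof is correct, but it takes a genuinely different route from the paper's. The paper deduces the lower $q$-estimate from the characteristic of monotonicity: it invokes Proposition \ref{e_0<1 es equivalente a lambda^+>1} (which in turn rests on Theorem \ref{lambda^+>1 implica que e_0<1} and Fact \ref{relationship between delta and eta}) to pass from $\lambda^+(X)>1$ to $\varepsilon_{0,m}(X)<1$, and then cites the external result \cite[Theorem 2.3(2)]{betiuk-prus} that $\varepsilon_{0,m}(X)<1$ forces a lower $q$-estimate with $q\in(1,\infty)$; the KB-space part is handled exactly as you handle it. You instead combine the sublattice monotonicity $\lambda^+(Z)\ge\lambda^+(X)$ with the local-global formula \eqref{eq:inf sup} of Proposition \ref{local vs global} to get $2^{1/s(X)}\ge\lambda^+(X)>1$, hence $s(X)<\infty$, and then use the upward closedness of the set of admissible exponents to land in $(1,\infty)$. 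Your argument stays entirely inside the paper (no appeal to the monotonicity literature) and yields the explicit bound $s(X)\le\log 2/\log\lambda^+(X)$, which is sharp for $L_p$; on the other hand it leans on the heaviest machinery in the paper (Krivine's theorem and the ultraproduct argument underlying Proposition \ref{local vs global}), whereas the paper's route is shorter and produces quantitative information of a different flavor, namely $\varepsilon_{0,m}(X)\le 1/\lambda^+(X)$. Your separate treatment of the finite-dimensional case (needed because Proposition \ref{local vs global} assumes $\dim X=\infty$) and your care in extracting $q>1$ strictly are both correct and genuinely necessary for your route.
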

  
\begin{proof}
  If $\varepsilon_{0,m}(X)<1$, by \cite[Theorem 2.3(2)]{betiuk-prus} $X$ satisfies a lower $q$-estimate for some $q\in(1,\infty)$.  
   The conclusion follows from Proposition \ref{e_0<1 es equivalente a lambda^+>1}. For second part, notice that a Banach lattice satisfying a lower $q$-estimate cannot contain a lattice copy of $c_0$, hence it must be a KB-space
   by the aforementioned result. 
\end{proof}

\begin{remark}
    The second part of Theorem \ref{Banach lattices with l>1 satisfacen q-estimativa} follows also from Corollary \ref{+schaefer and subspaces}. Indeed,  If $X$ is not a KB space, then, by \cite[Theorem 2.4.12]{meyer}, it contains a sublattice isomorphic to $c_0$. By \cite[Theorem 2]{chen}, $X$ contains lattice-almost isometric copies of $c_0$.
    Since $\lambda^+(c_0)=1$, we obtain $\lambda^+(X)=1$ by Corollary \ref{+schaefer and subspaces}. 
    
    On the other hand, being a KB-space does not imply that $\lambda^+>1$ as the following example shows. Let $(p_n)$ be a sequence in $(1,\infty)$ such that $\lim_{n\to\infty} p_n=\infty$ and consider the Banach lattice $X=(\bigoplus_{n\in\mathbb N}\ell_{p_n}^2)_2$. Since $X$ is reflexive, $X$ is a KB-space. On the other hand, $\lambda^+(X)\leq 2^{1/p_n}$ for all $n\in\mathbb N$.
    Hence, $\lambda^+(X)=1$.
\end{remark}

Now we give a condition to obtain superreflexivity of $X$ in terms of the characteristic $\tilde\varepsilon_{0,m}(X)$. 

\begin{corollary}
    Let $X$ be a Banach lattice. If $\tilde\varepsilon_{0,m}(X)+\alpha(X)<2$, then $X$ is superreflexive.
\end{corollary}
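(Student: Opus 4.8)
The plan is to extract from the hypothesis two complementary one‑sided lattice estimates and then invoke the fact that their conjunction forces superreflexivity. First I would dispose of the finite‑dimensional case, which is trivial, and assume $\dim X=\infty$. Taking $x=y\in S_X^+$ in the definition of the Riesz angle shows $\alpha(X)\ge1$, so the hypothesis $\tilde\varepsilon_{0,m}(X)+\alpha(X)<2$ splits into two useful consequences: on the one hand $\tilde\varepsilon_{0,m}(X)<2-\alpha(X)\le1$, and on the other hand, since $\tilde\varepsilon_{0,m}(X)\ge0$, $\alpha(X)<2$. I would exploit these two facts separately.

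From $\tilde\varepsilon_{0,m}(X)<1$, Proposition \ref{tilde e_0<1 equivale a lambda>1} gives $\lambda^+(X)>1$, and then Theorem \ref{Banach lattices with l>1 satisfacen q-estimativa} produces a lower $q$‑estimate for some $q\in(1,\infty)$. From $\alpha(X)<2$ I would get an upper estimate: since every sublattice $Z\subseteq X$ satisfies $\alpha(Z)\le\alpha(X)$ (the relevant suprema are taken over a smaller set), formula \eqref{eq:convex sup inf} of Proposition \ref{local vs global} yields
\[
2^{1/S(X)}=\lim_n\ \sup_{\dim Y=n}\ \inf_{Z\subseteq Y}\alpha(Z)\ \le\ \alpha(X)<2 ,
\]
whence $S(X)>1$, so $X$ satisfies an upper $p$‑estimate for some $p\in(1,\infty)$.

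It then remains to use the fact that a Banach lattice satisfying a lower $q$‑estimate with $q<\infty$ and an upper $p$‑estimate with $p>1$ is superreflexive. If a clean reference is available (this is essentially contained in \cite{lindenstrauss-tzafriri}, \S1.f, together with the fact that such a lattice is $r$‑convex and $s$‑concave for suitable $1<r\le s<\infty$, or in \cite{betiuk-prus}) I would simply cite it; otherwise I would argue directly. Both estimates are properties of finite families of pairwise disjoint elements, hence are inherited with the same exponents and constants by every ultrapower $X_{\mathcal U}$ (over an arbitrary index set); an upper $p$‑estimate with $p>1$ rules out any sublattice lattice‑isomorphic to $\ell_1$ (the disjoint positive images of the unit vectors would violate the estimate once their number is large), and a lower $q$‑estimate with $q<\infty$ rules out any sublattice lattice‑isomorphic to $c_0$. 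By the standard characterization of reflexive Banach lattices as those omitting $c_0$ and $\ell_1$ as sublattices (see \cite{meyer}), $X_{\mathcal U}$ is therefore reflexive; since every Banach space finitely representable in $X$ embeds isometrically into some such ultrapower (see \cite{Heinrich}), $X$ is superreflexive. The hard part is exactly this final step — pinning down the most economical citation for ``lower $q$‑estimate ($q<\infty$) plus upper $p$‑estimate ($p>1$) $\Rightarrow$ superreflexive,'' or else assembling the ultrapower argument cleanly; everything preceding it is routine and uses only results already established above.
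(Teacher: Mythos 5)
Your proof is correct and follows essentially the same route as the paper: both split the hypothesis into $\tilde\varepsilon_{0,m}(X)<1$ (giving $\lambda^+(X)>1$ via Proposition \ref{tilde e_0<1 equivale a lambda>1} and hence a lower $q$-estimate, $q\in(1,\infty)$, via Theorem \ref{Banach lattices with l>1 satisfacen q-estimativa}) and $\alpha(X)<2$ (giving an upper $p$-estimate, $p\in(1,\infty)$), and then combine the two estimates to conclude superreflexivity. The only deviations are in sourcing, and both are valid: for the upper $p$-estimate the paper cites \cite[Theorem 2.3]{betiuk-prus} whereas you derive it internally from \eqref{eq:convex sup inf} of Proposition \ref{local vs global} together with the monotonicity $\alpha(Z)\leq\alpha(X)$ for sublattices $Z$; and for the final implication the paper simply cites \cite[Theorem 1.f.10]{lindenstrauss-tzafriri}, whereas you also sketch a correct self-contained ultrapower argument.
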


\begin{proof}
    Since $1\leq\alpha(X)\leq2$, we obtain $\tilde\varepsilon_{0,m}(X)<2-\alpha(X)\leq 1$. From Proposition \ref{tilde e_0<1 equivale a lambda>1}
    it follows that $\lambda^+(X)>1.$ By Corollary \ref{Banach lattices with l>1 satisfacen q-estimativa}, $X$ satisfies a lower $q$-estimate for some $q\in(1,\infty)$.  
    On the other hand, since $\alpha(X)<2$, $X$ satisfies an upper $p$-estimate for some $p\in(1,\infty)$ by \cite[Theorem 2.3]{betiuk-prus}.
    Thus, $X$ is superreflexive by \cite[Theorem 1.f.10]{lindenstrauss-tzafriri}.
\end{proof}

 In \cite[Theorem 3.2]{prus} it is proved that if $2\varepsilon_{0,m}(X)+\alpha(X)<2$, then $X$ is uniformly non-square. Notice that if $2\varepsilon_{0,m}(X)+\alpha(X)<2$, by \eqref{inequalities envolving e_0 and tilde e_0} we also have $\tilde\varepsilon_{0,m}(X)+\alpha(X)<2$. This leads to the natural question:

\begin{question}
    Let $X$ be a Banach lattice. Is $X$ uniformly non-square provided that $\tilde\varepsilon_{0,m}(X)+\alpha(X)<2$?
\end{question}

\section*{Acknowledgments}

The first author thanks to Universidad Industrial de Santander.
 
\vspace{2mm}

\section*{Declarations}

\textbf{Ethical Approval} 

Not applicable.

\

\textbf{Funding}

First author: Universidad Industrial de Santander.

\

\textbf{Availability of data and materials} 

Not applicable.

\

\textbf{Financial interests}

The authors have no relevant financial or non-financial interests to disclose.

\

\textbf{Conflict of Interests/Competing Interests}

The authors declare that they have no conflicts of interest/competing interests.

\bibliographystyle{amsalpha}

\begin{thebibliography}{A}

\bibitem {AK}  F. Albiac;  N. J. Kalton,  \textit{Topics in Banach space theory.} Graduate Texts in Mathematics, 233.
Springer, New York, 2006.

\bibitem{A-B} C.~Aliprantis; O.~Burkinshaw. \textit{Positive operators.} Springer, Berlin-New York, 2006.

\bibitem{bernau} S. J. Bernau, \textit{A unified approach to the principle of local reflexivity}, in Notes in
Banach Spaces, 427–439, Univ. Texas Press, Austin, Tex., 1980.

\bibitem{betiuk-prus}
A. Betiuk-Pilarska; S. Prus,  
\textit{Banach lattices which are order uniformly noncreasy.} J. Math. Anal. Appl. 342 (2008), no. 2, 1271–1279.

\bibitem{borwein-sims} J.M. Borwein; B. Sims, \textit{Non-expansive mappings on Banach lattices and related topics}, Houston J. Math. 10 (1984) 339–356.

\bibitem{chen} J. Chen, \textit{The almost lattice isometric copies of $c_0$ in Banach lattices}, Comment. Math. Univ. Carolin. 46 (3) (2005) 409–412.

\bibitem{Diestel}
J. Diestel. Geometry of Banach spaces – selected topics. % Lecture Notes in Mathematics, Vol. 485. 
Springer, Berlin-New York, 1975.

\bibitem{fora and all} P. Foralewski; H. Hudzik; R. Kaczmarek; M. Krbec.
\textit{Moduli and characteristics of monotonicity in some Banach lattices.}
Fixed Point Theory Appl. 2010, Art. ID 852346, 22 pp.

\bibitem{galego-me} E. M. Galego; M. A. Rinc\'on-Villamizar, \textit{On positive embeddings of $C(K)$ spaces into $C(S,X)$ lattices.} J. Math. Anal. Appl. 467 (2018), no. 2, 1287–1296.

\bibitem{galego-me positive} E.M. Galego; M.A. Rinc\'on-Villamizar, \textit{Banach-lattice isomorphisms of $C_0(K,X)$ spaces which determine the locally compact spaces $K$.} Fund. Math. 239 (2) (2017) 185–200.

\bibitem{gao} J. Gao, 
\textit{The uniform degree of the unit ball of a Banach space (I).}
Nanjing Daxue Xuebao, 1982, 1, 14–28.

\bibitem{Heinrich} S.~Heinrich,
\textit{Ultraproducts in Banach space theory.}
J. Reine Angew. Math. 313 (1980) 72-104.

\bibitem{hudzik-kaczmarek} H. Hudzik; R. Kaczmarek, 
\textit{Moduli and characteristics of monotonicity in general Banach lattices and in Orlicz spaces in particular.}
Nonlinear Anal. 70 (2009), no. 9, 3407–3423.

\bibitem{HKM} H. Hudzik; A. Kaminska; M Mastylo,
\textit{Monotonicity and rotundity properties in Banach lattices.} Rocky Mountain J. Math. 30 (2000), 933--950.

\bibitem{jarosz}
K. Jarosz, \textit{Small isomorphisms of $C(X,E)$ spaces}, Pacific J. Math. 138 (1989), 295-315.

\bibitem{kato-maligranda-takahashi}
M. Kato; L. Maligranda; Y. Takahashi, \textit{On James and Jordan-von Neumann constants and the normal structure coefficient of Banach spaces.}
Studia Math. 144 (2001), no.3, 275–295.

\bibitem{kurc}
W. Kurc, \textit{A dual property to uniform monotonicity in Banach lattices.} Collect. Math. 44 (1993), no. 1-3, 155–165.

\bibitem{kurcII} W. Kurc, 
\textit{Strictly and uniformly monotone Musielak-Orlicz spaces and applications to best approximation.}
J. Approx. Theory   69 (1992), no. 2, 173–187.

\bibitem{lacey}
H.E. Lacey,
\textit{The isometric theory of classical Banach spaces.}
Die Grundlehren der mathematischen Wissenschaften, Band 208
Springer-Verlag, New York-Heidelberg, 1974.

\bibitem{lifsic}
E.A. Lifsic, 
On the theory of partially ordered Banach spaces. (Russian)
Funkcional. Anal. i Prilozen. 3 (1969), no. 1, 91-92.

\bibitem{lindenstrauss-tzafriri}
J. Lindenstrauss; L. Tzafriri, \textit{Classical Banach Spaces II}, Springer-Verlag, New York, 1979.

\bibitem{mar-prus} J. Markowicz; S. Prus, 
\textit{James constant, Garc\'ia-Falset coefficient and uniform Opial property in direct sums of Banach spaces.}
J. Nonlinear Convex Anal. 17 (2016), no. 11, 2237–2253.

\bibitem{meyer}  P. Meyer-Nieberg, \textit{Banach lattices.} Universitext. Springer-Verlag, Berlin, 1991.

\bibitem{prus} S. Prus, \textit{On uniform nonsquareness and uniform normal structure in Banach lattices.}
J. Convex Anal. 21 (2014), no. 1, 167–177.

\bibitem{Rosenthal} H. Rosenthal. On a theorem of J. L. Krivine concerning block finite representability of $\ell^p$ in general Banach spaces.
J. Functional Analysis 28 (1978), 197--225.

\bibitem{sch74} H. H. Schaeffer, \textit{Banach lattices and positive operators,} Schpringer, New York, 1974.

\bibitem{schaffer} J. J. Sch\"affer, \textit{Geometry of Spheres in Normed Spaces,} Dekker, New York, 1976.

\bibitem{Schep} A. Schep. Krivine's theorem and the indices of a Banach lattice.
% Positive operators and semigroups on Banach lattices (Curacao, 1990)
Acta Appl. Math. 27 (1992), 111--121.

\bibitem{tsek} P. Tsekrekos, 
Some applications of the $L$-constants and $M$-constants on Banach lattices.
J. London Math. Soc. (2) 18 (1978), no. 1, 133–139.

\bibitem{wnuk} W. Wnuk, \textit{Remarks concerning M-constants in Banach lattices}, Banach and Function Spaces IV, Yokohama Publ., Yokohama, 2014,
pp. 259–273.

\end{thebibliography}

\end{document}